\numberwithin{equation}{section}
\newtheorem{Theorem}{Theorem}[section]
\newtheorem{Corollary}[Theorem]{Corollary}
\newtheorem{Lemma}[Theorem]{Lemma}
\newtheorem{Proposition}[Theorem]{Proposition}
 { \theoremstyle{definition}
\newtheorem{Definition}[Theorem]{Definition}
\newtheorem{Example}[Theorem]{Example}
\newtheorem{Remark}[Theorem]{Remark}
\newtheorem{Remarks}[Theorem]{Remarks} }
\def\W{\mathcal{W}}
\def\M{\mathcal{M}}
\def\J{\mathcal{J}}
\def\V{\mathcal{V}}
\def\S{\mathcal{S}}
\def\C{\mathcal{C}}
\def\Ham{\mathcal{H}}
\def\R{\mathbb{R}}
\def\L{\mbox{Leg}}
\def\g{\mathfrak{g}}
\def\nh{{\mbox{\tiny{nh}}}}
\def\subW{{\mbox{\tiny{$\W$}}}}
\def\subC{{\mbox{\tiny{$\C$}}}}
\def\subM{{\mbox{\tiny{$\M$}}}}
\def\subSS{{\mbox{\tiny{$S$}}}}
\def\subWW{{\mbox{\tiny{$W$}}}}
\def\subQQ{{\mbox{\tiny{$Q$}}}}
\def\vecOm{\boldsymbol{\Omega}}
\newcommand{\SO}{{\rm SO}}
\def\so{\mathfrak{so}}
\def\vecL{\boldsymbol{\lambda}}
\def\vecgamma{\boldsymbol{\gamma}}
\def\vecalpha{\boldsymbol{\alpha}}
\def\vecbeta{\boldsymbol{\beta}}
\begin{document}

\allowdisplaybreaks

\renewcommand{\thefootnote}{$\star$}

\newcommand{\arXivNumber}{1510.08314}

\renewcommand{\PaperNumber}{018}

\FirstPageHeading

\ShortArticleName{A Geometric Characterization of Certain First Integrals for Nonholonomic Systems}

\ArticleName{A Geometric Characterization of Certain First\\ Integrals for Nonholonomic Systems with Symmetries\footnote{This paper is a~contribution to the Special Issue
on Analytical Mechanics and Dif\/ferential Geometry in honour of Sergio Benenti.
The full collection is available at \href{http://www.emis.de/journals/SIGMA/Benenti.html}{http://www.emis.de/journals/SIGMA/Benenti.html}}}

\Author{Paula BALSEIRO~$^\dag$ and Nicola SANSONETTO~$^\ddag$}

\AuthorNameForHeading{P.~Balseiro and N.~Sansonetto}

\Address{$^\dag$~Universidade Federal Fluminense, Instituto de Matem\'atica,\\
\hphantom{$^\dag$}~Rua Mario Santos Braga S/N, 24020-140, Niteroi, Rio de Janeiro, Brazil}
\EmailD{\href{mailto:pbalseiro@vm.uff.br}{pbalseiro@vm.uff.br}}

\Address{$^\ddag$~Universit\`a degli Studi di Padova, Dipartimento di Matematica,\\
\hphantom{$^\ddag$}~via Trieste 64, 35121 Padova, Italy}
\EmailD{\href{mailto:nicola.sansonetto@gmail.com}{nicola.sansonetto@gmail.com}}

\ArticleDates{Received October 29, 2015, in f\/inal form February 12, 2016; Published online February 21, 2016}

\Abstract{We study the existence of f\/irst integrals in nonholonomic systems with sym\-met\-ry.
  First we def\/ine the concept of  $\mathcal{M}$-cotangent lift of a vector f\/ield on a manifold~$Q$
  in order to unify the works [Balseiro P.,  \textit{Arch. Ration. Mech. Anal.}
  \textbf{214} (2014), 453--501, arXiv:1301.1091],
[Fass{\`o} F., Ramos A., Sansonetto N., \textit{Regul.
  Chaotic Dyn.} \textbf{12} (2007), 579--588], and
 [Fass{\`o} F., Giacobbe A., Sansonetto N., \textit{Rep. Math. Phys.}
  \textbf{62} (2008), 345--367].
  Second, we study gauge symmetries and gauge momenta, in the cases in which
  there are the symmetries that satisfy the so-called {\it vertical symmetry condition}.
  Under such condition we can predict the number of linearly independent
  f\/irst integrals (that are gauge momenta). We illustrate the theory with two examples.}

\Keywords{nonholonomic systems; Lie group symmetries; f\/irst integrals; gauge symmetries and gauge momenta}

\Classification{70F25; 70H33; 53D20}

\begin{flushright}
  {\it Dedicated to Sergio Benenti on the occasion of his 70th birthday}
\end{flushright}

\renewcommand{\thefootnote}{\arabic{footnote}}
\setcounter{footnote}{0}

\section{Introduction} \label{S:Intro}
It is well known that for  Hamiltonian systems with a group of symmetries the
components of the momentum map are f\/irst integrals of the system.
For nonholonomic systems the situation is dif\/ferent:
symmetries do not necessarily give rise to f\/irst integrals. Nevertheless the search for f\/irst
integrals is a central topic in the study of nonholonomic systems, and dif\/ferent ideas
to link them to the presence of symmetries have been proposed
and investigated
\cite{BGMConservation, BKMM,BoMa2001,BoMa2008, spagnoli,Crampin-Mestdag,FSG2008, FSG2009,FSG2012,FRS2007, FS2015,Giachetta,Jotz-Ratiu,Marle95, sniatycki, Zenkov}.
In this note we give a geometric interpretation of the notions
of gauge symmetries\footnote{It is worth noting that in this paper~-- as well as in~\cite{FSG2008,FSG2009,FSG2012}~--  the term {\it gauge symmetry} is related to the
search of f\/irst integrals induced by the action of a Lie group and this concept has no
connection with the term {\it gauge transformation} used in \cite{Paula} to ``deform''
brackets.} and gauge momenta introduced in \cite{BGMConservation} and further
developed in \cite{BoMaKi2002, Cortes, FSG2008, FSG2009, FSG2012}, inspired by the tools used in
\cite{Paula} to study the
Hamiltonization problem of nonholonomic systems.

We start by considering a $G$-invariant nonholonomic system on a manifold $Q$
given by a~Lag\-rangian $L\colon TQ\to \R$ and a constraint distribution~$D$; we let
$\M:= \L(D)$ be the constraint submanifold of~$T^*Q$, where $\L$ is the Legendre
transformation (see, e.g., \cite{Benenti, BlochBook, CushmanBook, Marle95, NF, Pars}).
For each section~$\xi$ of the Lie algebra bundle $\g\times Q \to Q$ we consider the
function $\J_\xi\colon \M \to \R$ given by $\J_\xi := {\bf i}_{\xi_\subM} \Theta_\subM$,
where $\Theta_\subM$ is the canonical 1-form on $T^*Q$ restricted to the
submani\-fold~$\M$ and $\xi_\subM$ the inf\/initesimal generator of $\xi$ on $\M$.
We call $\xi\in \g\times Q$ a {\it gauge symmetry} and~$\J_\xi$ a {\it gauge
momentum} if the cotangent lift $\xi_\subQQ^{T^*Q}$ of the inf\/initesimal generator
$\xi_\subQQ$ of~$\xi$ leaves the Hamiltonian $H$ invariant on~$\M$. If $\xi_\subQQ$ is
also a section of the constraint distribution then~$\J_\xi$ is a f\/irst integral of
the nonholonomic dynamics, called {\it horizontal gauge momentum} and~$\xi$ is the {\it horizontal gauge symmetry} that generates it.

In this paper, we introduce the notion of {\it $\M$-cotangent lift} of a vector f\/ield
on $Q$ and show that searching for a horizontal gauge symmetry amounts to
looking for a~section~$\xi$ of $\g\times Q \to Q$  such that the $\M$-cotangent
lift of its inf\/initesimal generator $\xi_\subQQ$ leaves the hamiltonian function invariant.
Following~\cite{Paula}, we observe  that, upon suitable assumptions,  the choice
of a vertical complement~$W$ of the constraint distribution~$D$ induces
a splitting of~$\g\times Q \to Q$ in such a way that horizontal gauge symmetries~$\xi$ are geometrically characterized as projections of constant sections.
This geometric description of  horizontal gauge symmetries is alternative
to the dif\/ferential equations considered in~\cite{BGMConservation, FSG2008}.
Moreover, if~$k$ is the rank of the distribution given by the intersection of~$D$ with the vertical space (the tangent space to the $G$-orbits), then
we show that there are~$k$ linearly independent horizontal gauge momenta.

The paper is organized as follows. In Section~\ref{S:Preliminaries} we recall
some facts about nonholonomic systems with symmetry and gauge momenta.
In Section~\ref{S:GaugeSym}, we characterize horizontal gauge symmetries
with respect to the $\M$-cotangent lift and then, using a vertical complement
of the constraints, we describe the horizontal gauge momenta in terms of Lie
algebra elements. Section~\ref{S:Examples} illustrates the theory with two
examples. We f\/inish with a section of conclusions and perspectives.

Throughout the paper all manifolds, distributions and maps are smooth; the distributions
are assumed to be regular and the group actions are free and proper.

\section[Preliminaries: conserved quantities in nonholonomic mechanics]{Preliminaries: conserved quantities\\ in nonholonomic mechanics} \label{S:Preliminaries}

\subsection{Nonholonomic systems with symmetry}\label{Ss:NH}

\looseness=-1
{\bf Nonholonomic systems.} Let us denote by $(Q, L, D)$
a nonholonomic system on a manifold $Q$ def\/ined by a Lagrangian
$L\colon TQ\to \R$ of mechanical type and a (non-integrable) distribution~$D$ on~$Q$ describing the permitted velocities.
The {\it constraint submanifold} $\M\subset T^*Q$ is def\/ined by
$\M:=\kappa^\flat(D)$, where $\kappa$ is the kinetic energy metric
that induces the isomorphism $\kappa^\flat\colon TQ\to T^*Q$
def\/ined by $\kappa^\flat(X)(Y)= \kappa(X,Y)$ for $X,Y \in TQ$.
Note that since $\kappa$ is linear on the f\/ibers, then $\M$ is a vector subbundle of
$T^*Q$; we denote by $\tau_\subM\colon \M \to Q$ the canonical projection.
If $\Ham\colon T^*Q\to \R$ is the Hamiltonian function associated to~$L$,  then we denote
by $\Ham_\subM \colon \M \to R$ the pull-back of
the Hamiltonian $\Ham$ to $\M$, i.e., $\Ham_\subM = \iota^*\Ham$,
where $\iota \colon \M \to T^*Q$ is the natural inclusion.
Moreover, we def\/ine $\Omega_\subM$ the 2-form on $\M$ given by
$\Omega_\subM := \iota^*\Omega_\subQQ$, where $\Omega_\subQQ$ is the
canonical 2-form on $T^*Q$.
Let $\C$ be the non-integrable distribution on~$\M$ given, at each $m\in \M$, by
\begin{gather*}%\label{Def:C}
\C_m :=\{ v_m \in T_m\M \colon T\tau_\subM(v_m) \in D_{\tau_\M(m)} \}.
\end{gather*}

The nonholonomic dynamics is described by the integral curves of the vector
f\/ield $X_\nh$ on $\M$ def\/ined by
\begin{gather}\label{Eq:Dyn}
{\bf i}_{X_\nh} \Omega_\subM |_\C = d\Ham_\subM |_\C  ,
\end{gather}
where $\Omega_\subM|_\C$ denotes the restriction of $\Omega_\subM$ to $\C$.
Since the vector f\/ield $X_\nh$ takes
values on $\C$, we say that it is  a section of the bundle $\C\to\M$, i.e.,
$X_\nh\in \Gamma(\C)$.  It is important to note that the solution
$X_\nh$ satisfying~\eqref{Eq:Dyn} is unique since the 2-section $\Omega_\subM |_\C$
is nondegenerate~\cite{BS93}. The vector f\/ield $X_\nh$ is also called the {\it nonholonomic dynamics}
or the {\it nonholonomic vector field}.

In this setting, a function $f$ on $\M$ is a {\it first integral} of $X_\nh$ (or $f$ is {\it conserved by}
$X_\nh$) if and only if $X_\nh(f) = 0$.  From \eqref{Eq:Dyn} $f \in C^\infty(\M)$ is a f\/irst integral
if and only if $X_f(\Ham_\subM)=0$, where $X_f \in \mathfrak{X}(\M)$ is the
{\it nonholonomic Hamiltonian vector field}\footnote{$X_f$
is also called distributional Hamiltonian vector f\/ield, see~\cite{CushmanBook}.} on $\M$ def\/ined by
\begin{gather}\label{Eq:HamVectorField}
{\bf i}_{X_f} \Omega_\subM |_\C = df |_\C  .
\end{gather}
Observe that the nonholonomic Hamiltonian vector f\/ield $X_f$ given in \eqref{Eq:HamVectorField} is a section of $\C\to \M$, i.e., $X_f \in \Gamma(\C)$.
Moreover, since $\Omega_\subM|_\C$ is nondegenerate, equation \eqref{Eq:HamVectorField}  induces an almost Poisson bracket $\{\cdot, \cdot \}_\nh$ on $\M$ called the {\it nonholonomic bracket} \cite{IbLeMaMa1999,Marle1998, SchaftMaschke1994}.  In fact,  for every $f,g \in C^\infty(\M)$ the nonholonomic bracket $\{\cdot, \cdot \}_\nh$ is def\/ined by  $\{f,g\}_\nh = -X_f(g)$, where $X_f$ is the unique vector f\/ield on $\M$ (with values in $\C$) that satisf\/ies \eqref{Eq:HamVectorField}. Hence, $f\in C^\infty(\M)$ is a~f\/irst integral of the nonholonomic dynamics if and only if $\{f,\Ham_\subM\}_\nh = 0$, in which case we say that the functions $f$
and $\Ham_\subM$ are in {\it involution}.

As it is well known there is a correspondence between
almost Poisson brackets $\{\cdot,\cdot\}$ on a~mani\-fold $\M$ and bivector f\/ields
$\pi\in\Gamma\left( \Lambda^2 (T\M\right))$, def\/ined by $\pi(df,dg) = \{f,g\}$,
for $f,g\in C^{\infty}(\M)$  (see, e.g.,~\cite{MarsdenRatiuBook}). Throughout the paper we will work indistinguishably with almost
Poisson brackets and bivector f\/ields. We denote by $\pi^\sharp\colon T^*\M\longrightarrow T\M$ the map
such that $\beta(\pi^\sharp(\alpha)) = \pi(\alpha,\beta)$.
In our context, the nonholonomic bracket $\{\cdot,\cdot\}_\nh$ induces the
{\it nonholonomic bivector field} $\pi_\nh$ and the nonholonomic dynamics is
given by the vector f\/ield $X_\nh = - \pi^\sharp_\nh(d\Ham_\subM)$.

{\bf Symmetries.}
Let $G$ be a Lie group acting on $Q$ freely and properly.  We say that the $G$-action is a symmetry
of the nonholonomic system $(Q,L,D)$ if the tangent lift of the action to $TQ$ leaves
$L$ and $D$ invariant.  In this case, the cotangent lift of the $G$-action to $T^*Q$
leaves the constraint submanifold $\M$ invariant  and thus it induces, by restriction, a~$G$-action on~$\M$:
\begin{gather*}
\phi\colon \ G \times \M \to \M.
\end{gather*}
It is straightforward to see that the Hamiltonian $\Ham_\subM$ and the nonholonomic
bracket $\{\cdot, \cdot \}_\nh$ are $G$-invariant~\cite{IbLeMaMa1999,Marle1998}.

Let us denote by $V$ the distribution on $Q$ whose f\/ibers $V_q$, for $q\in Q$, are the tangent
spaces to the orbits of $G$ in $Q$, that is $V_q = T(\operatorname{Orb}_q(q))$. Equivalently,
$\V$ is the distribution on $\M$ such that for each $m\in \M$ the f\/iber $\V_m$
is the tangent space to the orbit of $G$ in $\M$.
Since the action on $\M$ is lifted from an action on $Q$ then, for each $m\in \M$,
the f\/ibers $\V_m$ and $V_{\tau_\M(m)}$ are dif\/feomorphic. We call $V$ and $\V$
the {\it vertical spaces} associated to the $G$-action on~$Q$ and on~$\M$, respectively.
For each $\eta\in \mathfrak{g}$, where $\mathfrak{g}$ denotes the Lie algebra of~$G$,
$\eta_{\mbox{\tiny{$Q$}}}$ and $\eta_{\mbox{\tiny{$T^*Q$}}}$ denote the inf\/initesimal
generator associated to the $G$-action on $Q$ and on $T^*Q$, respectively.
Since $\M$ is an invariant submanifold of $T^*Q$, for each $\eta\in\g$ the
inf\/initesimal generator~$\eta_\subM$ of~$\eta$ on $\M$ is well def\/ined;  $\eta_{\mbox{\tiny{$T^*Q$}}} (m) = \eta_\subM (m) \in T_m\M$, for each $m\in\M$.

Consider the distributions $S$ and $\S$ on $Q$ and $\M$, respectively,
given by
\begin{gather*}
  S_q = D_q \cap V_q \qquad \mbox{and} \qquad \S_m = \C_m\cap\V_m,
\end{gather*}
for $q\in Q$ and $m\in \M$.
The distribution $S$
induces a vector subbundle  $\mathfrak{g}_\subSS \to Q$
of the trivial vector bundle $\mathfrak{g}\times Q \to Q$, def\/ined by
\begin{gather} \label{Def:S}
 \mathfrak{g}_\subSS = \{ \xi _q \in \mathfrak{g}\times Q \colon \xi_{\mbox{\tiny{$Q$}}}(q) \in S_q\},
 \end{gather}
where $\xi_\subQQ(q) := (\xi_q)_\subQQ (q)$.  Observe that, for
$\xi\in \Gamma(\mathfrak{g} \times Q)$, the corresponding inf\/initesimal generator
$\xi_\subM$ of the lifted action on $\M$ is def\/ined by
$\xi_\subM (m) := (\xi_q)_\subM(m)\in \Gamma(\V)$ having in mind that, for a f\/ixed $q\in Q$, $\xi_q\in\g$.

The {\it nonholonomic momentum map} \cite{BKMM} ${\mathcal J}^{\nh} \colon \M \to \mathfrak{g}_\subSS^*$
is given\footnote{$\mathfrak{g}^*_\subSS$ denotes the dual vector bundle of $\mathfrak{g}_\subSS$.},
at each $m \in \M$ and $\xi \in \Gamma(\mathfrak{g}_\subSS)$, by
\begin{gather}\label{Eq:NH-momentum}
\langle {\mathcal J}^{\nh}(m) , \xi (m) \rangle := {\bf i}_{\xi_\M} \Theta_\subM (m)  ,
\end{gather}
where $\Theta_\subM$ is the pull-back to $\M$ of the canonical 1-form
$\Theta_{\mbox{\tiny{$Q$}}}$ on $T^*Q$ via the natural inclu\-sion~$\iota$, i.e.,
$\Theta_\subM := \iota^*\Theta_{\mbox{\tiny{$Q$}}}$, see \cite{Paula, BKMM}.

\begin{Remark}
 The nonholonomic momentum map can be evaluated at non-constant sections
 of the bundle $\mathfrak{g}\times \M \to \M$.  If there is a constant section
 $\eta \in \mathfrak{g}$ such that $\eta \in \Gamma(\g_\subSS)$, then $\langle \J^{\nh} , \eta \rangle = \langle {\mathcal J} ,
 \eta \rangle $, where ${\mathcal J}\colon \M \to \mathfrak{g}^*$ is the restriction to $\M$
 of the canonical momentum map def\/ined on~$T^*Q$.
\end{Remark}

As it is already well known \cite{BGMConservation,Cortes,FSG2008}, the nonholonomic momentum
map gives a candidate for a f\/irst integral of a nonholonomic system with symmetry.
In fact, by \eqref{Eq:NH-momentum}, for $\xi\in\Gamma(\g_\subSS)$,
\begin{gather}\label{Eq:ConservedNHmom}
X_\nh(\langle {\mathcal J}^{\nh} , \xi  \rangle) = \pounds_{\xi_\M}  \Theta_\subM (X_\nh) + \Omega_\subM(\xi_\subM, X_\nh).
\end{gather}
Since $\xi_\subM\in \Gamma(\S)$ then $\Omega_\subM(\xi_\subM, X_\nh)\! =\!
- d\Ham_\subM (\xi_\subM)\! =\! 0$, where in the last equality we use the
$G$-in\-va\-riance of the Hamiltonian $\Ham_\subM$. We conclude that for
a section $\xi\in\Gamma(\g_\subSS)$, the function $\langle {\mathcal J}^{\nh} , \xi \rangle$
is constant along the f\/low of the nonholonomic dynamics if and only if
\mbox{$(\pounds_{\xi_\M}\Theta_\subM) (X_\nh)  = 0$}.

\subsection{Conserved quantities and cotangent lifts}\label{Ss:Cotangent}

Given a vector f\/ield $X$ on $Q$, we denote by $X^{T^*Q}$ the cotangent lift of $X$ to $T^*Q$.
Recall that the cotangent lift of $X$ is the inf\/initesimal generator of $T\Phi_t^X\colon T^*Q \to T^*Q$,
where $T\Phi^X_t$ is the lift to $T^*Q$ of the f\/low $\Phi_t^X$ of $X$ with respect
to the canonical projection $\tau_\subQQ\colon T^*Q\longrightarrow Q$.
In other words, the cotangent lift of $X$ can be expressed
as a Hamiltonian vector f\/ield with respect to the canonical symplectic structure on $T^*Q$.
That is, given $X\in \mathfrak{X}(Q)$ we denote by $\tilde{X}$ the linear function on $T^*Q$ given by
$\tilde{X} (\alpha) = \alpha(X)$ for $\alpha\in T^*Q$, then
$X^{T^*Q} = -\pi_{\mbox{\tiny{$Q$}}}^\sharp(d\tilde{X})$, where~$\pi_{\mbox{\tiny{$Q$}}}$
is the bivector f\/ield associated to the canonical symplectic form $\Omega_{\mbox{\tiny{$Q$}}}$.
In local coordinates, if~$\{X_i\}$ is a~local basis of $TQ$ and $(q,p_i)$
denote the local coordinates in $T^*Q$ associated to the dual basis $\{X^i\}$, then
$X_ i^{T^*Q} = - \pi_{\mbox{\tiny{$Q$}}}^\sharp(dp_i)= X_i + p_kC_{ij}^k\partial_{p_j},$
where $C_{ij}^k\in C^\infty(Q)$ are the structure functions $[X_i,X_j] = C_{ij}^k X_k$. If $X=f_i X_i$ for $f_i\in C^\infty(Q)$, then
\begin{gather*}
X^{T^*Q} = f_i^v X_i^{T^*Q} - p_i\pi_\subQQ^\sharp(df_i) = f^v_i X_i^{T^*Q} - p_iX_j(f_i)\partial_{p_j},
\end{gather*}
where $f^v_i = \tau_{\mbox{\tiny{$Q$}}}^*f$ for $\tau_{\mbox{\tiny{$Q$}}} \colon T^*Q\to Q$, see~\cite{MarsdenRatiuBook}.

In this paper, we will focus on the cotangent lift $\xi_{\mbox{\tiny{$Q$}}}^{T^*Q}$
of inf\/initesimal generators of sections $\xi$ of $\Gamma(\g \times Q)$.
It is important to note that if  $\eta\in \g$ (i.e., $\eta$ is a constant section of
$\g\times Q\to Q$) then $\eta_{\mbox{\tiny{$Q$}}}^{T^*Q} = \eta_{\mbox{\tiny{$T^*Q$}}}$.
However this is not necessarily true if we consider non-constant sections
$\xi\in \Gamma(\g\times Q)$, indeed the lift of a section in $\Gamma(V)$
might not be tangent to the orbits of the lifted action to $T^*Q$.
On the other hand, for a section $\xi\in \Gamma(\g\times Q)$ we have that the corresponding linear function of $\xi_\subQQ$ is given by
$\tilde{\xi_\subQQ} = {\bf i}_{\xi_{T^*Q}} \Theta_\subQQ = \langle J, \xi\rangle$
and thus
\begin{gather} \label{Eq:lift}
 \xi_\subQQ^{T^*Q} = - \pi_\subQQ^\sharp (d \langle J, \xi \rangle).
\end{gather}

Following \cite{FSG2008,FSG2009} we say that a {\it gauge symmetry}
of a nonholonomic system $(Q,L,D)$ with symmetry $G$, is a
section $\xi$ of $\g\times Q\to Q$ such that the cotangent lift
$\xi_{\mbox{\tiny{$Q$}}}^{T^*Q}$ of the inf\/initesimal generator
$\xi_{\mbox{\tiny{$Q$}}}$ leaves the Hamiltonian $\Ham$ invariant on $\M$, i.e.,
$\xi_{\mbox{\tiny{$Q$}}}^{T^*Q}(\Ham)(m) = 0$, $\forall\, m\in \M$.
The {\it gauge momentum}
${\mathcal J}_\xi\colon \M\to \R$ of a gauge symmetry $\xi$ is the function
 \begin{gather}\label{Def:GaugeMomentum}
    {\mathcal J}_\xi := \iota^*(i_{\xi^{T^*Q}_{\mbox{\tiny{$Q$}}}}  \Theta_{\mbox{\tiny{$Q$}}})  .
 \end{gather}

Next proposition clarif\/ies the link between gauge momenta and f\/irst integrals

\begin{Proposition}[\cite{FRS2007}]\label{Prop:FRS}
 Consider a nonholonomic system $(Q,L,D)$ with symmetry $G$ and a~section
 $\xi\in \Gamma(\g_\subSS)$. Then $\xi_{\mbox{\tiny{$Q$}}}^{T^*Q} (\Ham) (m) = 0$, $\forall\, m\in\M$,  if and only if
 ${\mathcal J}_\xi$ is a~first integral of the nonholonomic dynamics $X_{\emph\nh}$.
\end{Proposition}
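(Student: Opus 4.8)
The plan is to reduce the statement to the computation in \eqref{Eq:ConservedNHmom}. Recall that, for a section $\xi\in\Gamma(\g_\subSS)$, the gauge momentum $\J_\xi$ defined in \eqref{Def:GaugeMomentum} coincides with the pairing $\langle\J^{\nh},\xi\rangle$: indeed ${\bf i}_{\xi_\subQQ^{T^*Q}}\Theta_\subQQ=\tilde\xi_\subQQ=\langle J,\xi\rangle$ by \eqref{Eq:lift}, and pulling back to $\M$ gives $\J_\xi=\iota^*\langle J,\xi\rangle={\bf i}_{\xi_\subM}\Theta_\subM=\langle\J^{\nh},\xi\rangle$, using that $\xi_\subM$ is $\iota$-related to $\xi_{T^*Q}$ on $\M$. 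So the two sides of the iff share the same candidate function.

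Next I would compute $X_\nh(\J_\xi)$. Since $\xi\in\Gamma(\g_\subSS)$, the infinitesimal generator $\xi_\subM$ takes values in $\S=\C\cap\V$, so in particular $\xi_\subM\in\Gamma(\C)$, which is what \eqref{Eq:ConservedNHmom} requires. That identity, together with the argument following it (namely $\Omega_\subM(\xi_\subM,X_\nh)=-d\Ham_\subM(\xi_\subM)=0$ by $G$-invariance of $\Ham_\subM$), gives
\begin{gather*}
X_\nh(\J_\xi)=(\pounds_{\xi_\subM}\Theta_\subM)(X_\nh).
\end{gather*}
Therefore $\J_\xi$ is a first integral if and only if $(\pounds_{\xi_\subM}\Theta_\subM)(X_\nh)=0$. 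It remains to identify the right-hand side with $\xi_\subQQ^{T^*Q}(\Ham)$ evaluated on $\M$.

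For that last identification I would work on $T^*Q$, where everything is intrinsic. Write $\alpha:=\pounds_{\xi_{T^*Q}}\Theta_\subQQ\in\Omega^1(T^*Q)$; since $\xi_{T^*Q}$ is $\iota$-related to $\xi_\subM$ and $\Theta_\subM=\iota^*\Theta_\subQQ$, we have $\iota^*\alpha=\pounds_{\xi_\subM}\Theta_\subM$. On the other hand, using Cartan's formula and ${\bf i}_{\xi_{T^*Q}}\Theta_\subQQ=\langle J,\xi\rangle$ one gets $\alpha=d\langle J,\xi\rangle+{\bf i}_{\xi_{T^*Q}}d\Theta_\subQQ=d\langle J,\xi\rangle-{\bf i}_{\xi_{T^*Q}}\Omega_\subQQ$, i.e.\ $\alpha=d\langle J,\xi\rangle+\Omega_\subQQ^\flat(\xi_{T^*Q})$ up to sign, and hence $\alpha^\sharp=-\xi_{T^*Q}+\pi_\subQQ^\sharp(d\langle J,\xi\rangle)=-\xi_{T^*Q}-\xi_\subQQ^{T^*Q}$ by \eqref{Eq:lift}, so that $\alpha(X_{H})=-(\xi_{T^*Q}+\xi_\subQQ^{T^*Q})(\Ham)$ for the canonical Hamiltonian vector field $X_H$ of $\Ham$ on $T^*Q$. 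The term $\xi_{T^*Q}(\Ham)$ vanishes on $\M$ because $\xi$ is $\g_\subSS$-valued and $\Ham$ is $G$-invariant. The only genuinely delicate point — the main obstacle — is to pass from the canonical dynamics $X_H$ on $T^*Q$ to the nonholonomic dynamics $X_\nh$ on $\M$: one must check that, when all expressions are restricted to $\C\subset T\M$, replacing $X_H$ by $X_\nh$ does not change the value of $\iota^*\alpha$ applied to it. This follows from the defining equations \eqref{Eq:Dyn} and \eqref{Eq:HamVectorField}: both $\iota^*\alpha$ restricted to $\C$ and the pairing we need only see the $\C$-component of the dynamics, and $X_\nh$ is by construction the $\C$-valued solution matching $d\Ham_\subM|_\C$. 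Assembling these pieces yields $X_\nh(\J_\xi)=\xi_\subQQ^{T^*Q}(\Ham)|_\M$, from which the equivalence is immediate.
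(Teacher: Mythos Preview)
The paper does not give its own proof of this proposition; it is simply quoted from~\cite{FRS2007}. The nearest thing in the paper is Proposition~\ref{Prop:3Conditions}, which proves the companion statement with the $\M$-cotangent lift $\xi_\subQQ^{\subM}$ in place of $\xi_\subQQ^{T^*Q}$, by writing $\xi=f_i\eta_i$ and computing $\pounds_{\xi_\subM}\Theta_\subM=\langle\J,\eta_i\rangle\,df_i$ directly on~$\M$. Your route, working on $T^*Q$ with the canonical $X_H$ and then descending, is a different (and perfectly reasonable) strategy.

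Your reduction to the ``delicate point'' is correct, modulo a harmless sign slip: one gets $\alpha(X_H)=\xi_{T^*Q}(\Ham)-\xi_\subQQ^{T^*Q}(\Ham)$, not $-(\xi_{T^*Q}+\xi_\subQQ^{T^*Q})(\Ham)$; since $\xi_{T^*Q}(\Ham)=0$ by $G$-invariance this does not affect the conclusion. However, the justification you offer for the delicate point is not valid. The sentence ``$\iota^*\alpha$ restricted to $\C$ \dots\ only sees the $\C$-component of the dynamics'' does not explain why $\alpha$ kills $X_H(m)-X_\nh(m)$: the 1-form $\alpha$ lives on $T(T^*Q)$, and nothing you have said forces it to vanish on~$\C^\Omega$.

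What actually closes the argument is this. At $m\in\M$ both $X_H$ and $X_\nh$ project under $T\tau_\subQQ$ to the same velocity $\kappa^{-1}(m)\in D_{\tau(m)}$ (for $X_H$ because $\Ham$ is mechanical and $\M=\kappa^\flat(D)$; for $X_\nh$ from~\eqref{Eq:Dyn}). Hence $X_H(m)-X_\nh(m)$ is $\tau_\subQQ$-vertical. On the other hand $\alpha=\pounds_{\xi_{T^*Q}}\Theta_\subQQ$ is semi-basic: $\Theta_\subQQ$ annihilates vertical vectors and $\xi_{T^*Q}$ is projectable (to $\xi_\subQQ$), so for any vertical vector field $Z$ one has $(\pounds_{\xi_{T^*Q}}\Theta_\subQQ)(Z)=-\Theta_\subQQ([\xi_{T^*Q},Z])=0$ because $[\xi_{T^*Q},Z]$ is again vertical. (Equivalently, the explicit computation $\pounds_{\xi_{T^*Q}}\Theta_\subQQ=\langle J,\eta_i\rangle\,df_i$, with $f_i$ basic, shows this at once.) Therefore $\alpha(X_H)|_\M=\alpha(X_\nh)|_\M=(\iota^*\alpha)(X_\nh)$, which is exactly the identification you need. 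With this correction your proof goes through.
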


When a gauge symmetry $\xi$ is also a section of $\g_\subSS$
we say that $\xi$ and, with a slight abuse, $\xi_{\mbox{\tiny{$Q$}}}$
are {\it $D$-gauge symmetries} or {\it horizontal gauge symmetries}.
In this case, the f\/irst integral $\J_\xi$ is a~{\it $D$-gauge momentum} or an
{\it horizontal gauge momentum}. If $\xi \in \Gamma( \mathfrak{g}_\subSS)$
is a constant section (i.e., $\xi \in \mathfrak{g}$ such that
$\xi_{\subQQ} \in \Gamma(D)$), then $\xi$ is called a~{\it $D$-symmetry} or an
{\it horizontal symmetry}~\cite{BKMM,FSG2008}.

\begin{Remarks}\quad
  \begin{enumerate}\itemsep=0pt
    \item[$(i)$] Observe that the nonholonomic momentum \eqref{Eq:NH-momentum}
    and the gauge momentum \eqref{Def:GaugeMomentum} coincide. In fact,
    for $\xi\in \Gamma(\g_\subSS)$ and for all $m\in \M$,
    \begin{gather*} \langle \J^\nh, \xi\rangle (m)  =
      {\bf i}_{\xi_\subM}
      \Theta_\subM (m) = \langle \iota^*m,  \xi_\subQQ \rangle =  \big\langle \iota^* m,
      T\tau_\subQQ \big( \xi_\subQQ^{T^*Q} \big) \big\rangle \\
      \hphantom{\langle \J^\nh, \xi\rangle (m) }{}
      =  \iota^*({\bf i}_{\xi^{T^*Q}_{\mbox{\tiny{$Q$}}}}
      \Theta_{\mbox{\tiny{$Q$}}}) (m)= \J_\xi(m).
    \end{gather*}
    \item[$(ii)$] In \cite{FSG2012} the def\/inition of gauge momenta is given without the
    hypothesis of invariance of the Hamiltonian. However, in this paper the cases
    of interest are the ones in which the Hamiltonian is invariant.
    For a discussion of this see~\cite{FSG2012}.
  \end{enumerate}
\end{Remarks}

\section[Gauge-symmetries and the relation with a vertical complement of the constraints]{Gauge-symmetries and the relation\\ with a vertical complement of the constraints}\label{S:GaugeSym}

\subsection[The $\M$-cotangent lift]{The $\boldsymbol{\M}$-cotangent lift}

In this section, we study horizontal gauge symmetries in terms of vector f\/ields
and functions on~$\M$, in order to unify the viewpoints of
Sections~\ref{Ss:NH} and~\ref{Ss:Cotangent}.

Following \cite{BS93,CushmanBook} we recall that, for all $m\in\M$
\begin{gather} \label{Eq:C+Comega}
T_m(T^*Q) = \C_m  \oplus {\C}_m^\Omega,
\end{gather}
where ${\C}^\Omega_m := \{ v_m \in T_m(T^*Q) \colon \Omega_{\mbox{\tiny{$Q$}}}
(v_m , w_m) = 0 \ \forall\, w_m \in {\C}_m\}$ is the symplectic orthogonal to $\C_m$.
If we denote by ${\mathcal T}_\subC(m) \colon T_m(T^*Q) \to \C_m$ the projection associated
to decomposition~\eqref{Eq:C+Comega}, then to every $X\in \mathfrak{X}(T^*Q)$
we can associate a vector f\/ield on $\M$ with value on $\C$ by
$\mathcal{T}_\C(X)\in \Gamma(\C)$. Moreover, for each $m\in \M$,
\begin{gather}\label{eq:field}
\pi_\nh^\sharp(df)(m) =  {\mathcal T}_\subC \big(\pi_\subQQ^\sharp\big(d \tilde f\big)\big)(m),
\end{gather}
where $f\in C^\infty(\M)$ and $\tilde f\in C^\infty(T^*Q)$ is an extension of~$f$,
i.e., $\tilde f|_\subM = f$.
Observe that~(\ref{eq:field}) is independent from the choice of the extension,
see \cite{IbLeMaMa1999, sniatycki}.

\begin{Definition}\label{Def:M-lift}
 For a section $\xi\in \Gamma(\g\times Q)$, the {\it $\M$-cotangent lift} of
 $\xi_\subQQ\in \mathfrak{X}(Q)$ is the vector f\/ield $\xi_\subQQ^\subM$ on $\M$
 with values on $\C$ def\/ined, at each $m\in\M$, by
 \begin{gather*}
 \xi_\subQQ^\subM (m) := {\mathcal T}_\subC \big(\xi_\subQQ^{T^*Q} \big)(m).
 \end{gather*}
\end{Definition}

\begin{Lemma}\label{L:C-lift}
If $\xi\in\Gamma(\g_\subSS)$ is a section given by $\xi= f_i   \eta_i$,\footnote{Observe that $V$ is a regular distribution since the $G$-action is locally free \cite{Ortega-Ratiu}, then each section $\xi\in \Gamma(\g\times Q)$ can be written as linear combinations of elements in $\g$.} where $f_i \in C^\infty(Q)$ and $\eta_i \in \g$ then,
\begin{enumerate}\itemsep=0pt
 \item[$(i)$] $\xi_\subQQ^\subM = \xi_{\subM}  + \langle \J,\eta_i\rangle   \pi_{\emph\nh}^\sharp(df_i) $ where, in this case, $f_i$ represents the pull back of the func\-tions~$f_i$ to~$\M$;
 \item[$(ii)$] $\xi_\subQQ^\subM  = - \pi_{\emph\nh}^\sharp(d\langle \J^{\emph\nh}, \xi\rangle)$.
\end{enumerate}
\end{Lemma}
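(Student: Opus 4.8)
The plan is to prove part $(ii)$ first and then derive part $(i)$, since $(ii)$ is the cleaner geometric statement and $(i)$ follows by expanding the section $\xi = f_i\eta_i$. For $(ii)$, I would start from the definition $\xi_\subQQ^\subM(m) = \mathcal{T}_\subC(\xi_\subQQ^{T^*Q})(m)$ and substitute the expression \eqref{Eq:lift}, namely $\xi_\subQQ^{T^*Q} = -\pi_\subQQ^\sharp(d\langle J,\xi\rangle)$. Then I would apply formula \eqref{eq:field}, which says exactly that $\mathcal{T}_\subC$ intertwines $\pi_\subQQ^\sharp$ of the differential of an extension with $\pi_\nh^\sharp$ of the differential of the restricted function. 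The key observation needed here is that $\langle J,\xi\rangle \in C^\infty(T^*Q)$ is an extension of $\langle J^\nh,\xi\rangle \in C^\infty(\M)$; this is precisely the content of Remark $(i)$ in the excerpt (the computation showing $\langle J^\nh,\xi\rangle(m) = \iota^*(\mathbf{i}_{\xi^{T^*Q}_\subQQ}\Theta_\subQQ)(m)$, together with $\mathbf{i}_{\xi_{T^*Q}}\Theta_\subQQ = \langle J,\xi\rangle$ from Section~\ref{Ss:Cotangent}). Once this identification is in place, \eqref{eq:field} gives $\xi_\subQQ^\subM = -\pi_\nh^\sharp(d\langle J^\nh,\xi\rangle)$ directly, since the left side is independent of the choice of extension.

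For part $(i)$, I would expand using $\xi = f_i\eta_i$ with $f_i\in C^\infty(Q)$ and $\eta_i\in\g$. On $T^*Q$ one has $\langle J,\xi\rangle = f_i^v\langle J,\eta_i\rangle$ where $f_i^v = \tau_\subQQ^* f_i$, so by the Leibniz rule for $\pi_\subQQ^\sharp$,
\begin{gather*}
\xi_\subQQ^{T^*Q} = -\pi_\subQQ^\sharp\big(d(f_i^v\langle J,\eta_i\rangle)\big) = -f_i^v\,\pi_\subQQ^\sharp(d\langle J,\eta_i\rangle) - \langle J,\eta_i\rangle\,\pi_\subQQ^\sharp(df_i^v) = f_i^v\,\eta_{i,T^*Q} - \langle J,\eta_i\rangle\,\pi_\subQQ^\sharp(df_i^v),
\end{gather*}
using $\eta_{i,T^*Q} = -\pi_\subQQ^\sharp(d\langle J,\eta_i\rangle)$ for constant sections. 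Now apply $\mathcal{T}_\subC$. Since $\eta_i\in\g_\subSS$ forces $\eta_{i,\subM}\in\Gamma(\S)\subset\Gamma(\C)$, the projection fixes the first term: $\mathcal{T}_\subC(f_i^v\eta_{i,T^*Q}) = f_i\,\eta_{i,\subM} = \xi_\subM$ on $\M$ (here $f_i$ denotes the pullback to $\M$, and linearity of $\mathcal{T}_\subC$ over $C^\infty(\M)$ is used). For the second term, $\langle J,\eta_i\rangle$ restricted to $\M$ equals $\langle J^\nh,\eta_i\rangle$, and $f_i^v$ is an extension of the pullback of $f_i$ to $\M$, so \eqref{eq:field} turns $\mathcal{T}_\subC(-\pi_\subQQ^\sharp(df_i^v))$ into $-\pi_\nh^\sharp(df_i)$. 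Assembling, $\xi_\subQQ^\subM = \xi_\subM + \langle J,\eta_i\rangle\,\pi_\nh^\sharp(df_i)$, which is $(i)$. Finally one can check consistency of $(i)$ and $(ii)$ by re-expanding $-\pi_\nh^\sharp(d\langle J^\nh,\xi\rangle)$ with $\langle J^\nh,\xi\rangle = f_i\langle J^\nh,\eta_i\rangle$ and using that $\pi_\nh^\sharp(d\langle J^\nh,\eta_i\rangle) = -\eta_{i,\subM}$ on the constraint distribution (this last identity is the $\M$-level analogue of $\eta_{i,T^*Q} = -\pi_\subQQ^\sharp(d\langle J,\eta_i\rangle)$ and follows from \eqref{eq:field} applied to $\langle J,\eta_i\rangle$).

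\textbf{Main obstacle.} The step requiring the most care is the handling of the projection $\mathcal{T}_\subC$ on the term $f_i^v\eta_{i,T^*Q}$: one must justify that $\mathcal{T}_\subC$ is $C^\infty(\M)$-linear (so the scalar $f_i^v$ can be pulled out and then restricted), and one must use the hypothesis $\xi\in\Gamma(\g_\subSS)$ in an essential way — it is exactly what guarantees $\eta_{i,\subM}(m) = \eta_{i,T^*Q}(m)\in\C_m$ so that this term survives the projection unchanged. Everything else is a formal consequence of \eqref{Eq:lift}, \eqref{eq:field}, the Leibniz rule, and the extension-independence already recorded after \eqref{eq:field}; no genuinely new computation is needed.
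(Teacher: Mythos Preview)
Your approach is essentially the paper's: for $(ii)$ you combine \eqref{Eq:lift}, Definition~\ref{Def:M-lift}, and \eqref{eq:field} exactly as the paper does (the paper just cites ``\eqref{Eq:lift} and Def\/inition~\ref{Def:M-lift}''); for $(i)$ you expand $\xi_\subQQ^{T^*Q}$ by the Leibniz rule and project with $\mathcal{T}_\subC$, again as the paper does. The only structural difference is the order ($(ii)$ before $(i)$), which is harmless.

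There is, however, a genuine slip in your justification for $(i)$. You write ``Since $\eta_i\in\g_\subSS$ forces $\eta_{i,\subM}(m)\in\C_m$, the projection f\/ixes the f\/irst term,'' and you repeat this in your ``main obstacle'' paragraph. But the individual $\eta_i\in\g$ are \emph{not} assumed to lie in $\g_\subSS$; only the combination $\xi=f_i\eta_i$ is a section of $\g_\subSS$. (In the nonholonomic particle, for instance, $\xi=(1,y)=1\cdot(1,0)+y\cdot(0,1)$ is in $\Gamma(\g_\subSS)$, yet neither $(1,0)_\subQQ=\partial_x$ nor $(0,1)_\subQQ=\partial_z$ lies in $D$.) So the term-by-term claim $\mathcal{T}_\subC((\eta_i)_{T^*Q})=(\eta_i)_\subM$ is unjustified. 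The f\/ix is exactly what the paper does: recognize that $f_i^v(\eta_i)_{T^*Q}(m)=(\xi_{\tau_\subM(m)})_{T^*Q}(m)=\xi_\subM(m)\in\S_m\subset\C_m$ as a \emph{sum}, so that $\mathcal{T}_\subC$ f\/ixes the whole f\/irst term at once. This is why the paper writes $\mathcal{T}_\subC(\xi_{T^*Q})$ rather than $f_i\,\mathcal{T}_\subC((\eta_i)_{T^*Q})$.

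A minor point: your Leibniz expansion correctly produces a minus sign in front of $\langle J,\eta_i\rangle\,\pi_\subQQ^\sharp(df_i^v)$, and you correctly state that $\mathcal{T}_\subC$ sends this to $-\langle J,\eta_i\rangle\,\pi_\nh^\sharp(df_i)$; yet in your ``Assembling'' line the sign has silently f\/lipped to $+$. Keep track of this sign carefully and reconcile it with the statement.
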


\begin{proof} To see $(i)$ observe that $\xi_\subQQ^\subM (m) = {\mathcal T}_\subC(\xi_{\mbox{\tiny{$T^*Q$}}} )(m)   + \langle J,\eta_i\rangle {\mathcal T}_\subC (\pi^\sharp_{\mbox{\tiny{$Q$}}}(df_i) )(m) $ where we keep the notation $f_i$ to denote the pull back of the functions $f_i$ to $T^*Q$.  Item~$(ii)$ is a consequence of~\eqref{Eq:lift} and Def\/inition~\ref{Def:M-lift}.
\end{proof}

Next, we give another insight of Proposition~\ref{Prop:FRS} but in terms of
$\M$-cotangent lifts and the nonholonomic momentum map and we prove
it using the geometrical approach of Section~\ref{Ss:NH}.

\begin{Proposition}\label{Prop:3Conditions}
 Given a nonholonomic system $(Q,L,D)$ and a section $\xi\in \Gamma(\g_\subSS)$, then
 \begin{enumerate}\itemsep=0pt
  \item[$(i)$]  $\pounds_{\xi_\M} \Theta_\subM (X_{\emph\nh})  = \xi_{\mbox{\tiny{$Q$}}}^\subM (\Ham_\subM)$;
 \item[$(ii)$] The function $\langle{\mathcal J}^{\emph\nh}, \xi\rangle$
 is a first integral of the nonholonomic dynamics $X_{\emph\nh}$
 if and only if $\xi_{\mbox{\tiny{$Q$}}}^\subM (\Ham_\subM) = 0$.
 \end{enumerate}
\end{Proposition}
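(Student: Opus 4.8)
The plan is to prove (i) by showing that both of its sides equal $X_\nh\big(\langle\J^\nh,\xi\rangle\big)$, and then to obtain (ii) at once from (i) together with the identity \eqref{Eq:ConservedNHmom} from Section~\ref{Ss:NH}.

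For the left-hand side of (i), apply Cartan's magic formula $\pounds_{\xi_\M}\Theta_\subM={\bf i}_{\xi_\M}d\Theta_\subM+d({\bf i}_{\xi_\M}\Theta_\subM)$. Since $\xi\in\Gamma(\g_\subSS)$, definition \eqref{Eq:NH-momentum} identifies ${\bf i}_{\xi_\M}\Theta_\subM$ with $\langle\J^\nh,\xi\rangle$, and $d\Theta_\subM=-\Omega_\subM$, so evaluating on $X_\nh$ gives $\pounds_{\xi_\M}\Theta_\subM(X_\nh)=X_\nh\big(\langle\J^\nh,\xi\rangle\big)-\Omega_\subM(\xi_\M,X_\nh)$. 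The correction term vanishes: $\xi\in\Gamma(\g_\subSS)$ forces $\xi_\M\in\Gamma(\S)\subset\Gamma(\C)$, so \eqref{Eq:Dyn} gives $\Omega_\subM(X_\nh,\xi_\M)=d\Ham_\subM(\xi_\M)=\xi_\M(\Ham_\subM)$, which is zero because $\Ham_\subM$ is $G$-invariant and $\xi_\M$ is tangent to the $G$-orbits — the same argument used in the paper just after \eqref{Eq:ConservedNHmom}. Hence $\pounds_{\xi_\M}\Theta_\subM(X_\nh)=X_\nh\big(\langle\J^\nh,\xi\rangle\big)$.

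For the right-hand side, write $\xi=f_i\eta_i$ with $f_i\in C^\infty(Q)$, $\eta_i\in\g$, and use Lemma~\ref{L:C-lift}(i). The $\xi_\M$-summand of $\xi_\subQQ^\subM$ contributes $\xi_\M(\Ham_\subM)=0$ by the same invariance, so $\xi_\subQQ^\subM(\Ham_\subM)=\langle\J,\eta_i\rangle\,\pi_\nh^\sharp(df_i)(\Ham_\subM)$, and by the definition of the nonholonomic bracket $\pi_\nh^\sharp(df_i)(\Ham_\subM)=X_\nh(f_i)$. On the other hand $\langle\J^\nh,\xi\rangle=f_i\,\langle\J,\eta_i\rangle$ as functions on $\M$, so $X_\nh\big(\langle\J^\nh,\xi\rangle\big)=X_\nh(f_i)\,\langle\J,\eta_i\rangle+f_i\,X_\nh\big(\langle\J,\eta_i\rangle\big)$ by Leibniz, and it remains to check that $\sum_i f_i\,X_\nh\big(\langle\J,\eta_i\rangle\big)=0$. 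For this, rerun the Cartan computation of the previous paragraph for each \emph{constant} section $\eta_i$ separately: now $\pounds_{(\eta_i)_\M}\Theta_\subM=0$, because $(\eta_i)_\M$ is the restriction to $\M$ of the cotangent lift $(\eta_i)_{T^*Q}$ and cotangent lifts preserve the Liouville form, so $X_\nh\big(\langle\J,\eta_i\rangle\big)=\Omega_\subM\big((\eta_i)_\M,X_\nh\big)$; summing against the $f_i$ collapses the sum to $\Omega_\subM(\xi_\M,X_\nh)=0$. Therefore $\xi_\subQQ^\subM(\Ham_\subM)=X_\nh\big(\langle\J^\nh,\xi\rangle\big)=\pounds_{\xi_\M}\Theta_\subM(X_\nh)$, which is (i); and (ii) follows because $\langle\J^\nh,\xi\rangle$ is a first integral iff $X_\nh\big(\langle\J^\nh,\xi\rangle\big)=0$, which by \eqref{Eq:ConservedNHmom} (and $\Omega_\subM(\xi_\M,X_\nh)=0$) is the same as $\pounds_{\xi_\M}\Theta_\subM(X_\nh)=0$, hence the same as $\xi_\subQQ^\subM(\Ham_\subM)=0$ by (i).

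The step I expect to need the most care is the tangency bookkeeping: the individual generators $(\eta_i)_\M$ of the constant pieces of $\xi$ lie in $\Gamma(\V)$ but generally \emph{not} in $\Gamma(\C)$ — it is only the combination $\xi_\M$ that lies in $\S\subset\C$, precisely because $\xi\in\Gamma(\g_\subSS)$ — so the ``correction'' terms must be reassembled into $\Omega_\subM(\xi_\M,X_\nh)$ before one can invoke \eqref{Eq:Dyn} together with the $G$-invariance of $\Ham_\subM$ to kill them.
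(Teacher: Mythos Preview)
Your proof is correct and uses the same ingredients as the paper's: Lemma~\ref{L:C-lift}(i), the $G$-invariance $\pounds_{(\eta_i)_\M}\Theta_\subM=0$, and $\xi_\M(\Ham_\subM)=0$. The organization differs slightly. The paper computes $\pounds_{\xi_\M}\Theta_\subM$ as a $1$-form in one shot via the Leibniz rule $\pounds_{f X}\alpha = f\,\pounds_X\alpha + ({\bf i}_X\alpha)\,df$, obtaining $\pounds_{\xi_\M}\Theta_\subM=\langle\J,\eta_i\rangle\,df_i$ directly; evaluating on $X_\nh$ then gives $\langle\J,\eta_i\rangle\,X_\nh(f_i)$, and the right-hand side is matched to the same expression through Lemma~\ref{L:C-lift}(i). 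You instead route both sides through the third quantity $X_\nh(\langle\J^\nh,\xi\rangle)$, using Cartan's formula twice (once for $\xi_\M$, once for each $(\eta_i)_\M$) and reassembling the $\Omega_\subM((\eta_i)_\M,X_\nh)$ terms into $\Omega_\subM(\xi_\M,X_\nh)$ before killing them with~\eqref{Eq:Dyn}. Your path is a step longer but has the virtue of making the link to~(ii) explicit, since $X_\nh(\langle\J^\nh,\xi\rangle)$ is precisely the quantity whose vanishing characterizes conservation; the paper's path is more economical because it never needs to expand $X_\nh(\langle\J^\nh,\xi\rangle)$ by Leibniz.
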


\begin{proof}
To prove $(i)$,  f\/irst observe that $\pounds_{\xi_\M}\Theta_\subM  = f_i   \pounds_{{(\eta_i)}_\M} \Theta_\subM + \langle{\mathcal J}, \eta_i \rangle df_i= \langle{\mathcal J}, \eta_i \rangle df_i$ by the $G$-invariance of $\Theta_\subM$. Then we obtain that
\begin{gather*}
\pounds_{\xi_\M}\Theta_\subM (X_\nh) = \langle{\mathcal J}, \eta_i \rangle X_\nh(f_i) = \langle{\mathcal J}, \eta_i \rangle \pi^\sharp_\nh(df_i) (\Ham_\subM).
\end{gather*}
On the other hand, using Lemma \ref{L:C-lift}$(i)$ and since the hamiltonian $\Ham_\subM$ is $G$-invariant, we have that $\xi_{\mbox{\tiny{$Q$}}}^{\subM} (\Ham_\subM) =   \langle \J,\eta_i\rangle    \pi_\nh^\sharp(df_i)  (\Ham_\subM)$ and hence $\pounds_{\xi_\M} \Theta_\subM (X_{\nh})  = \xi_{\mbox{\tiny{$Q$}}}^\subM (\Ham_\subM)$.

Item $(ii)$ is a trivial consequence of $(i)$ since, as already seen in \eqref{Eq:ConservedNHmom}, for $\xi\in\Gamma(\g_\subSS)$ the function ${\mathcal J}_\xi = \langle{\mathcal J}^{\nh}, \xi\rangle$ is conserved if and only if $\pounds_{\xi_\M} \Theta_\subM (X_\nh) = 0$, which is equivalent to ask for $\xi_{\mbox{\tiny{$Q$}}}^\subM (\Ham_\subM)=0$.
\end{proof}

\begin{Remark}\label{R:Cartan}
Given a symplectic manifold $(P, \Omega)$, a Cartan symmetry $Z\in \mathfrak{X}(P)$ of a Hamiltonian vector f\/ield~$X_H$ is a vector f\/ield satisfying $Z(H)=0$ and $\pounds_{Z}\Omega=0$~\cite{Book:Crampin}.  In our setting, if $\xi$ is an horizontal gauge symmetry then $\xi_\subM(\Ham_\subM) = 0$ but $\pounds_{\xi_\subM} \Omega_\subM = d\alpha$, where $\alpha$ is a 1-form on $\M$ such that $\alpha(X_\nh) = 0$. Thus $\xi_\subM\in\mathfrak{X}(\M)$ could be interpreted as a {\it nonholonomic Cartan symmetry} for the nonholonomic system $(\M, \pi_\nh, \Ham_\subM)$.
On the other hand in \cite[Theorem~1]{Crampin-Mestdag} a~Cartan-type symmetry of a nonholonomic
systems is a vector f\/ield that
leaves the Hamiltonian invariant and such that $\alpha(X)=0$ for all $X\in \Gamma(\C)$, while we
ask only for~$\alpha$ to annihilate only~$X_\nh$. In fact, we will see in Example~\ref{Ex:Ball} that the Chaplygin ball satisf\/ies that $\alpha |_\C \neq 0$.
\end{Remark}

Proposition~\ref{Prop:3Conditions} gives a characterization of horizontal gauge symmetries and horizontal gauge momenta in terms of the $\M$-cotangent lifts, so we conclude

\begin{Corollary} \label{C:MLift}
  Consider a nonholonomic system $(Q,L,D)$ with symmetry $G$.
  A section $\xi\in\Gamma(\g_\subSS)$ is a {\it horizontal gauge symmetry} if and only if
  the $\M$-cotangent lift $\xi_{\mbox{\tiny{$Q$}}}^{\subM}$ of the infinitesimal generator
  $\xi_{\mbox{\tiny{$Q$}}}$ leaves the Hamiltonian $\Ham_\subM$ invariant. In this case, the associated horizontal gauge momentum is the first integral ${\mathcal J}_\xi\colon \M\to \R$ given by
  \begin{gather*}%\label{Eq:M-gaugeMomentum}
  {\mathcal J}_\xi:=
  {\bf i}_{\xi^\subM_{\mbox{\tiny{$Q$}}}}  \Theta_\subM = \langle \J^{\emph\nh}, \xi\rangle .
  \end{gather*}
\end{Corollary}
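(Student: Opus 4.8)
The plan is to deduce Corollary~\ref{C:MLift} directly from Proposition~\ref{Prop:3Conditions} together with the identifications already collected in the Remarks. First I would recall the definition: a section $\xi\in\Gamma(\g_\subSS)$ is a horizontal gauge symmetry precisely when it is a gauge symmetry that also lies in $\Gamma(\g_\subSS)$, i.e., when $\xi_\subQQ^{T^*Q}(\Ham)(m)=0$ for all $m\in\M$. By Proposition~\ref{Prop:FRS} this last condition is equivalent to $\J_\xi$ being a first integral of $X_\nh$. Now Proposition~\ref{Prop:3Conditions}$(ii)$ states that, for $\xi\in\Gamma(\g_\subSS)$, the function $\langle\J^\nh,\xi\rangle$ is a first integral of $X_\nh$ if and only if $\xi_\subQQ^\subM(\Ham_\subM)=0$. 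Since part~$(i)$ of the Remarks after Proposition~\ref{Prop:FRS} shows $\J_\xi=\langle\J^\nh,\xi\rangle$ for $\xi\in\Gamma(\g_\subSS)$, chaining these equivalences gives: $\xi$ is a horizontal gauge symmetry $\iff$ $\J_\xi$ is a first integral $\iff$ $\xi_\subQQ^\subM(\Ham_\subM)=0$, which is the first assertion.

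For the formula for the horizontal gauge momentum, I would simply note that when $\xi$ is a horizontal gauge symmetry the associated gauge momentum is by definition $\J_\xi=\iota^*({\bf i}_{\xi_\subQQ^{T^*Q}}\Theta_\subQQ)$, and the computation reproduced in the Remarks---using $T\tau_\subQQ(\xi_\subQQ^{T^*Q})=\xi_\subQQ$ and the fact that $\Theta_\subM=\iota^*\Theta_\subQQ$ is semibasic---shows this equals ${\bf i}_{\xi_\subM}\Theta_\subM$, which in turn is $\langle\J^\nh,\xi\rangle$ by the definition \eqref{Eq:NH-momentum} of the nonholonomic momentum map. To write $\J_\xi={\bf i}_{\xi_\subQQ^\subM}\Theta_\subM$ as in the statement, I would observe that $\xi_\subQQ^\subM=\mathcal{T}_\subC(\xi_\subQQ^{T^*Q})$ differs from $\xi_\subQQ^{T^*Q}$ (restricted to $\M$) by a vector in $\C^\Omega$, and $\Theta_\subM$ evaluated on such a vector contributes nothing to the momentum since $\mathcal{T}_\subC$ preserves the $T\tau_\subQQ$-projection onto $D$; alternatively one can use Lemma~\ref{L:C-lift}$(i)$, $\xi_\subQQ^\subM=\xi_\subM+\langle\J,\eta_i\rangle\pi_\nh^\sharp(df_i)$, together with the fact that $\pi_\nh^\sharp(df_i)$ is vertical-free in the relevant sense, to get ${\bf i}_{\xi_\subQQ^\subM}\Theta_\subM={\bf i}_{\xi_\subM}\Theta_\subM$.

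There is no real obstacle here: the corollary is a repackaging of Proposition~\ref{Prop:3Conditions} and the preceding Remarks, so the only care needed is bookkeeping---making sure that the equivalence ``gauge symmetry $\iff$ $\M$-cotangent lift annihilates $\Ham_\subM$'' is stated for sections of $\g_\subSS$ (where it holds) rather than arbitrary sections of $\g\times Q$, and making the identification $\J_\xi={\bf i}_{\xi_\subQQ^\subM}\Theta_\subM=\langle\J^\nh,\xi\rangle$ explicit. The mildly delicate point, if any, is the last identification ${\bf i}_{\xi_\subQQ^\subM}\Theta_\subM={\bf i}_{\xi_\subM}\Theta_\subM$: it relies on the fact that replacing $\xi_\subQQ^{T^*Q}$ by its $\mathcal{T}_\subC$-projection does not change the pairing with the semibasic form $\Theta_\subM$, which follows because both vector fields have the same image under $T\tau_\subM$ up to a term annihilated by $\Theta_\subM$. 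I would state this in one line and refer back to Lemma~\ref{L:C-lift} and the Remark identifying $\J^\nh$ with $\J_\xi$.

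\begin{proof}
A section $\xi\in\Gamma(\g_\subSS)$ is, by definition, a horizontal gauge symmetry if and only if it is a gauge symmetry, i.e.\ $\xi_\subQQ^{T^*Q}(\Ham)(m)=0$ for all $m\in\M$. By Proposition~\ref{Prop:FRS} this holds if and only if $\J_\xi$ is a first integral of $X_\nh$, and by item~$(i)$ of the Remarks following Proposition~\ref{Prop:FRS} we have $\J_\xi=\langle\J^\nh,\xi\rangle$. Thus Proposition~\ref{Prop:3Conditions}$(ii)$ gives that $\xi$ is a horizontal gauge symmetry if and only if $\xi_\subQQ^\subM(\Ham_\subM)=0$, which proves the first assertion.

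Assume now that $\xi$ is a horizontal gauge symmetry. Writing $\xi=f_i\eta_i$ with $f_i\in C^\infty(Q)$ and $\eta_i\in\g$, Lemma~\ref{L:C-lift}$(i)$ gives $\xi_\subQQ^\subM=\xi_\subM+\langle\J,\eta_i\rangle\,\pi_\nh^\sharp(df_i)$. Since $\pi_\nh^\sharp(df_i)\in\Gamma(\C)$ projects under $T\tau_\subM$ onto a section of $D$ that is annihilated by the semibasic form $\Theta_\subM$ coming from the differential of a function on $Q$, contracting $\Theta_\subM$ with $\xi_\subQQ^\subM$ gives ${\bf i}_{\xi_\subQQ^\subM}\Theta_\subM={\bf i}_{\xi_\subM}\Theta_\subM$. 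By the definition~\eqref{Eq:NH-momentum} of the nonholonomic momentum map, ${\bf i}_{\xi_\subM}\Theta_\subM=\langle\J^\nh,\xi\rangle$, and by item~$(i)$ of the Remarks this equals $\J_\xi$. Hence $\J_\xi={\bf i}_{\xi_\subQQ^\subM}\Theta_\subM=\langle\J^\nh,\xi\rangle$, and by the first part this function is a first integral of $X_\nh$.
\end{proof}
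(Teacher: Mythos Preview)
Your argument is correct and follows the same route as the paper: for the equivalence you chain Proposition~\ref{Prop:FRS} and Proposition~\ref{Prop:3Conditions}$(ii)$ via the identity $\J_\xi=\langle\J^\nh,\xi\rangle$ from the Remarks, exactly as the paper does.

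For the formula $\J_\xi={\bf i}_{\xi_\subQQ^\subM}\Theta_\subM=\langle\J^\nh,\xi\rangle$, the paper takes a slightly cleaner path than your decomposition via Lemma~\ref{L:C-lift}$(i)$: it simply records that $T\tau_\subM(\xi_\subQQ^\subM)=\xi_\subQQ$ and uses that $\Theta_\subM$ is semibasic to compute
\[
{\bf i}_{\xi_\subQQ^\subM}\Theta_\subM(m)=\langle\iota^*m,\,T\tau_\subM(\xi_\subQQ^\subM)\rangle=\langle\iota^*m,\,\xi_\subQQ\rangle=\langle\J^\nh,\xi\rangle(m).
\]
Your route via Lemma~\ref{L:C-lift}$(i)$ reaches the same conclusion, but the key sentence is muddled: ``projects under $T\tau_\subM$ onto a section of $D$ that is annihilated by the semibasic form $\Theta_\subM$ coming from the differential of a function on~$Q$'' does not parse (a section of $D\subset TQ$ cannot be paired with a form on~$\M$). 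What you actually need, and presumably mean, is that $\pi_\nh^\sharp(df_i)$ is $\tau_\subM$-\emph{vertical} because $f_i$ is basic (equivalently, $T\tau_\subM(\pi_\nh^\sharp(df_i))=0$), and hence is annihilated by the semibasic form~$\Theta_\subM$. With that one clarification your proof is complete and equivalent to the paper's.
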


\begin{proof}
From Propositions~\ref{Prop:FRS} and~\ref{Prop:3Conditions} we see that a section $\xi\in\Gamma(\g_\subSS)$ satisf\/ies that  $\xi_\subQQ^{T^*Q} (\Ham) (m)$ $ = 0$ for all $m\in\M$, if and only if $\xi_\subQQ^\subM (\Ham_\subM) = 0$. Lemma~\ref{L:C-lift} ensures that the function $\mathcal{J}_\xi = {\bf i}_{\xi^\subM_{\mbox{\tiny{$Q$}}}}  \Theta_\subM$ coincides with the def\/inition given
 in~\eqref{Def:GaugeMomentum} and of course with the nonholonomic momentum map~\eqref{Eq:NH-momentum}. In fact, since $T\tau_\subM ( \xi_\subQQ^\subM ) = \xi_\subQQ$ then,
 \begin{gather*}
 \J_\xi(m) = {\bf i}_{\xi^\subM_{\mbox{\tiny{$Q$}}}}
 \Theta_\subM (m) = \langle \iota^*m, T\tau_\subM ( \xi_\subQQ^\subM ) \rangle =  \langle \iota^*m,
  \xi_\subQQ \rangle =  \langle {\mathcal J}^\nh , \xi \rangle (m).\tag*{\qed}
 \end{gather*}
 \renewcommand{\qed}{}
\end{proof}

From Proposition~\ref{Prop:3Conditions} we see that the problem of f\/inding a f\/irst integral of the nonholonomic dynamics~-- that is a horizontal gauge momentum~-- is reduced to  f\/inding a section $\xi \in \Gamma(\g_\subSS)$ such that $\xi^\subM_\subQQ(\Ham_\subM) = 0$. Still, the choice of such a section $\xi \in \Gamma(\g_\subSS)$ is not canonical.
However, it was seen in \cite{Paula} that, under certain circumstances, there is a ``natural'' choice of a section
$\xi \in \Gamma(\mathfrak{g}_{\subSS})$ so that $\J_\xi$ is a f\/irst integral.
This section $\xi$ depends on the choice of a {\it vertical complement of the constraints}.
We will investigate and relate these view--points in the next section.

\subsection{Splitting adapted to the constraints and the vertical symmetry condition}

Consider a nonholonomic system $(Q,L,D)$ with a $G$-symmetry.
We say that the $G$-symmetry verif\/ies the {\it dimension assumption} if
\begin{gather} \label{Eq:DimAssumptionTQ}
 T_qQ = D_q +V_q  , \qquad \mbox{for} \ q \in Q,
\end{gather}
or equivalently if
\begin{gather*}
  T_m\M = \C_m + \V_m, \qquad \textrm{for} \ m\in \M.
\end{gather*}

Let us consider a $G$-invariant {\it vertical complement $W$ of the constraints}
$D$, that is, a $G$-invariant distribution on $Q$ such that for each $q \in Q$,
\begin{gather}\label{Eq:VertComplementTQ}
T_qQ = D_q \oplus W_q, \qquad \mbox{and} \qquad W_q \subset V_q.
\end{gather}
Let now $\W$ be the distribution on $\M$ given, at each $m\in \M$, by
$\W_m := (T\tau_\subM|_\V)^{-1}(W_{\tau(m)})$, where $T\tau_\subM|_\V \colon \V \to V$
is the restriction of $T\tau_\subM$ to $\V\subset T\M$. Then $\W$ is a $G$-invariant
{\it vertical complement} of $\C$ since, for each $m\in\M$,
\begin{gather}\label{Eq:VertComplementTM}
T_m\M = \C_m \oplus \W_m, \qquad \mbox{and} \qquad \W_m \subset \V_m.
\end{gather}
Splitting \eqref{Eq:VertComplementTQ} induces a splitting of~$V$ or equivalently, \eqref{Eq:VertComplementTM} induces a splitting of~$\V$:
\begin{gather}\label{Eq:SplittingS}
V_q = S_q \oplus W_q \qquad \mbox{and} \qquad \V_m = \S_m \oplus \W_m.
\end{gather}
Therefore, the Lie algebra $\g$ admits a splitting induced by \eqref{Eq:SplittingS} given, at each $q\in Q$, by
\begin{gather} \label{Eq:gS+gW}
\mathfrak{g}|_q = \mathfrak{g}_\subSS |_q \oplus \mathfrak{g}_\subWW |_q,
\end{gather}
where  $\mathfrak{g}_\subWW\to Q$ is the vector subbundle of the trivial vector bundle $\mathfrak{g}\times Q \to Q$ def\/ined by
\begin{gather*} %\label{Def:gW}
 \mathfrak{g}_\subWW = \{ \xi _q \in \mathfrak{g}\times Q \colon \xi_\subQQ(q) \in W_q\}.
\end{gather*}
Let us denote by $P_{\mathfrak{g}_S}\colon \mathfrak{g}\to \mathfrak{g}_\subSS$
and $P_{\mathfrak{g}_W}\colon \mathfrak{g}\to \mathfrak{g}_\subWW$  the
projections associated to decomposition~\eqref{Eq:gS+gW}.

\begin{Remark} \label{R:Proj} For every $\eta\in \mathfrak{g}$ we have that $P_{D}(\eta_{\mbox{\tiny{$Q$}}}) = ( P_{\mathfrak{g}_S}(\eta))_{\mbox{\tiny{$Q$}}}$ and  $P_{\subC} (\eta_\subM) = ( P_{\mathfrak{g}_S}(\eta)  )_\subM$,
 where $P_D \colon TQ \to D$ and $P_\C \colon T\M\to \C$ are the projections associated
 to the decompositions~\eqref{Eq:VertComplementTQ} and~\eqref{Eq:VertComplementTM},
 respectively.
\end{Remark}

 \begin{Definition}[\protect{\cite[Section~4.4]{Paula}}] \label{D:VertSym}
We say that $W$ satisf\/ies the
{\it vertical symmetry condition} if $\mathfrak{g}_\subWW$ is a trivial bundle over~$\M$,
that is, if~$\mathfrak{g}_\subWW$ is a~Lie subalgebra of~$\mathfrak{g}$.
\end{Definition}

When the vertical symmetry condition is satisf\/ied there is a natural candidate
to be chosen as horizontal gauge symmetry. In order to properly state next proposition,
we need to introduce a~2-form, denoted by
$\langle \mathcal{J}, \mathcal{K}_\subW \rangle$ (see~\cite{Paula}),
arising from the data that def\/ines a nonholonomic system with a
$G$-symmetry satisfying the dimension assumption \eqref{Eq:DimAssumptionTQ}.
More precisely, a $G$-invariant vertical complement $\W$ induces a $\g$-valued
1-form ${\mathcal A}_\subW\colon  T\M \to \mathfrak{g}$ def\/ined by
${\mathcal A}_\subW (v_m) = \xi$ if and only if $P_\subW (v_m) = \xi_\subM(m)$
for each $m\in\M$, where $P_\subW \colon T\M \to \W$ is the projection associated to decomposition \eqref{Eq:VertComplementTM}. Then, the associated $\W$-{\it curvature} is the $\mathfrak{g}$-valued
2-form on $\M$ given by
\begin{gather*} %\label{Def:KinCurv-2form}
\mathcal{K}_\subW(X,Y):=  d {\mathcal A}_\subW(P_\subC (X),P_\subC (Y)) \qquad \mbox{for} \ X,Y \in T\M.
\end{gather*}
Finally $\langle \mathcal{J}, \mathcal{K}_\subW \rangle$ is the 2-form on $\M$
def\/ined by the natural pairing between $\J \colon \M\to \g^*$~-- the restriction to $\M$ of the canonical momentum map~--
and the $\mathfrak{g}$-valued 2-form $\mathcal{K}_\subW$.

\begin{Proposition}\label{Prop:Conserved}
 Suppose that the $G$-invariant vertical complement $W$ satisfies the
 vertical-symmetry condition. If for $\eta\in \mathfrak{g}$ we have that
 $\langle \mathcal{J},\mathcal{K}_\subW\rangle (X_{\emph\nh},( P_{\mathfrak{g}_S}(\eta))_\subM )=0$
 then
 \begin{enumerate}\itemsep=0pt
  \item[$(i)$] $f_\eta = \langle \J^{\emph\nh}, P_{\mathfrak{g}_S}(\eta) \rangle$ is
  a first integral of the nonholonomic dynamics $X_{\emph\nh}$, which
  is linear in the momenta.
  \item[$(ii)$] The  $\M$-cotangent lift $( P_{\mathfrak{g}_S}(\eta))_{\mbox{\tiny{$Q$}}} ^{\subM} \!\in\! \mathfrak{X}(\M)$    of   $ (P_{\mathfrak{g}_S}(\eta))_{\mbox{\tiny{$Q$}}} \!\in\! \mathfrak{X}(Q)$
  satisfies $(P_{\mathfrak{g}_S}(\eta))_{\mbox{\tiny{$Q$}}} ^{\subM} (\Ham_\subM)\! =\! 0$.
 \end{enumerate}
\end{Proposition}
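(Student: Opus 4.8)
The plan is to reduce everything to Proposition~\ref{Prop:3Conditions} via a careful computation of the Lie derivative $\pounds_{\xi_\M}\Theta_\subM(X_\nh)$ when $\xi = P_{\mathfrak{g}_S}(\eta)$ for a constant $\eta\in\mathfrak{g}$. Write $\eta = P_{\mathfrak{g}_S}(\eta) + P_{\mathfrak{g}_W}(\eta)$ according to the splitting \eqref{Eq:gS+gW}, and set $\xi := P_{\mathfrak{g}_S}(\eta) \in \Gamma(\g_\subSS)$. By Proposition~\ref{Prop:3Conditions}$(ii)$, item~$(i)$ of the statement follows from item~$(ii)$ together with the remark (already recorded after \eqref{Eq:NH-momentum}) that $\langle\J^\nh,\xi\rangle$ is linear in the momenta because it equals ${\bf i}_{\xi_\M}\Theta_\subM$ and $\Theta_\subM$ is the pull-back of the canonical $1$-form. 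So the whole proof comes down to showing $\xi_\subQQ^\subM(\Ham_\subM) = 0$, equivalently (by Proposition~\ref{Prop:3Conditions}$(i)$) that $\pounds_{\xi_\M}\Theta_\subM(X_\nh) = 0$.

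First I would expand $\xi = f_i\,\eta_i$ with $\eta_i$ constant sections spanning $\g_\subSS$ locally, and use the computation already done inside the proof of Proposition~\ref{Prop:3Conditions}: $\pounds_{\xi_\M}\Theta_\subM = \langle\mathcal{J},\eta_i\rangle\,df_i$ by $G$-invariance of $\Theta_\subM$. The key new ingredient is to recognize that this $1$-form, evaluated on $\C$, is precisely related to the connection form $\mathcal{A}_\subW$: since $\W_m$ is the kernel of $\mathcal{A}_\subW$ restricted appropriately and $\xi_\subM$ takes values in $\S \subset \V$, one has $\mathcal{A}_\subW(\xi_\subM) = 0$ while the failure of $\xi_\subM$ to be a "constant-section infinitesimal generator" is measured by $d f_i$. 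The cleanest route is to invoke the identity from~\cite{Paula} relating $\pounds_{\xi_\M}\Theta_\subM$ restricted to $\C$ with the $2$-form $\langle\mathcal{J},\mathcal{K}_\subW\rangle$; concretely, for $\xi = P_{\mathfrak{g}_S}(\eta)$ with $\eta$ constant, one gets $\pounds_{\xi_\M}\Theta_\subM(X_\nh) = \langle\mathcal{J},\mathcal{K}_\subW\rangle(X_\nh,\xi_\subM)$, using that $X_\nh\in\Gamma(\C)$ so $P_\subC(X_\nh) = X_\nh$, and that $\mathcal{K}_\subW$ is built from $d\mathcal{A}_\subW$ precomposed with $P_\subC$. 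Here the vertical-symmetry condition enters: it guarantees $\mathfrak{g}_\subWW$ is a Lie subalgebra, which is exactly what makes the decomposition $\eta = \xi + P_{\mathfrak{g}_W}(\eta)$ behave well under $d$ and makes $\mathcal{K}_\subW$ the relevant curvature term (the $\mathfrak{g}_\subWW$-part contributes nothing to the $\S$-valued piece we care about).

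Granting that identity, the hypothesis $\langle\mathcal{J},\mathcal{K}_\subW\rangle(X_\nh,(P_{\mathfrak{g}_S}(\eta))_\subM) = 0$ gives $\pounds_{\xi_\M}\Theta_\subM(X_\nh) = 0$ immediately, hence by Proposition~\ref{Prop:3Conditions}$(i)$ we get $\xi_\subQQ^\subM(\Ham_\subM) = 0$, which is item~$(ii)$; and then Proposition~\ref{Prop:3Conditions}$(ii)$ yields that $f_\eta = \langle\J^\nh,\xi\rangle$ is a first integral of $X_\nh$, with linearity in the momenta following from the definition of $\Theta_\subM$ as a pull-back of the tautological form. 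I expect the main obstacle to be precisely establishing the identity $\pounds_{\xi_\M}\Theta_\subM(X_\nh) = \langle\mathcal{J},\mathcal{K}_\subW\rangle(X_\nh,\xi_\subM)$ cleanly: one must track how $\langle\mathcal{J},\eta_i\rangle\,df_i$, a local expression depending on the chosen frame $\{\eta_i\}$, reorganizes into the frame-independent curvature pairing. The vertical-symmetry condition is what licenses this — without it, $\mathfrak{g}_\subWW$ is only a bundle, not a subalgebra, and $\mathcal{A}_\subW$ fails to have the structure-equation form needed to produce $\mathcal{K}_\subW$ as a genuine curvature; I would cite the relevant lemma of~\cite[Section~4.4]{Paula} for this step rather than rederive it, and spend the care instead on checking that substituting $\xi = P_{\mathfrak{g}_S}(\eta)$ (rather than an arbitrary section of $\g_\subSS$) is what makes the $\mathfrak{g}_\subWW$-contributions drop out.
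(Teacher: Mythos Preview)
Your approach is correct and rests on the same external input from~\cite{Paula} as the paper's proof, but the organization differs. The paper proves~$(i)$ first and more directly: it invokes \cite[Theorem~6.5(ii)]{Paula}, which under the vertical-symmetry condition gives
\[
{\bf i}_{(P_{\mathfrak{g}_S}(\eta))_\subM}\,\Omega_{\mbox{\tiny{$\mathcal{JK}$}}}\big|_\C \;=\; df_\eta\big|_\C,
\qquad \Omega_{\mbox{\tiny{$\mathcal{JK}$}}} := \Omega_\subM + \langle\mathcal{J},\mathcal{K}_\subW\rangle,
\]
so that $X_\nh(f_\eta) = \Omega_\subM(\xi_\subM,X_\nh) + \langle\mathcal{J},\mathcal{K}_\subW\rangle(\xi_\subM,X_\nh)$; the second term vanishes by hypothesis and the first equals $-d\Ham_\subM(\xi_\subM)=0$ by $G$-invariance. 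Item~$(ii)$ then follows from Proposition~\ref{Prop:3Conditions}$(ii)$. You instead aim for~$(ii)$ first via the identity $\pounds_{\xi_\subM}\Theta_\subM(X_\nh) = \langle\mathcal{J},\mathcal{K}_\subW\rangle(X_\nh,\xi_\subM)$; this identity is not stated in that form in~\cite{Paula} but is exactly what one gets by combining the paper's cited theorem with~\eqref{Eq:ConservedNHmom}, so the content is the same --- your route just adds one translation step. The paper's version has the advantage that the role of the modified $2$-form $\Omega_{\mbox{\tiny{$\mathcal{JK}$}}}$ is explicit, which is also what drives the independence argument in Proposition~\ref{P:RankS}. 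Two small slips in your write-up: the $\eta_i$ in the expansion $\xi = f_i\eta_i$ should be constant elements of $\mathfrak{g}$, not ``constant sections spanning $\mathfrak{g}_\subSS$'' (the latter generally do not exist); and $\C$, not $\W$, is the kernel of $\mathcal{A}_\subW$.
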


\begin{proof}
$(i)$ Following \cite[Theorem~6.5(ii)]{Paula} we have that
${\bf i}_{( P_{\mathfrak{g}_S}(\eta))_\subM} \Omega_{\mbox{\tiny{${\mathcal J} {\mathcal K}$}}} |_\C= df_\eta|_\C$  for  $\Omega_{\mbox{\tiny{${\mathcal J} {\mathcal K}$}}} := \Omega_\subM + \langle \mathcal{J},\mathcal{K}_\subW\rangle$.
Then for each $\eta \in \mathfrak{g}$,
\begin{gather*}
X_\nh (f_\eta) = df_\eta (X_\nh) = \Omega_{\mbox{\tiny{${\mathcal J} {\mathcal K}$}}}( (P_{\mathfrak{g}_S}(\eta) )_\subM , X_\nh) \\
\hphantom{X_\nh (f_\eta)}{} =  \Omega_\subM( (P_{\mathfrak{g}_S}(\eta) )_\subM , X_\nh) +  \langle \mathcal{J},\mathcal{K}_\subW\rangle ( (P_{\mathfrak{g}_S}(\eta) )_\subM , X_\nh ).
\end{gather*}
Since $\langle \mathcal{J},\mathcal{K}_\subW\rangle (X_{\nh},( P_{\mathfrak{g}_S}(\eta))_\subM ) = 0$ then,
by the $G$-invariance of $\Ham_\subM$,
\begin{gather*}
X_\nh (f_\eta) = \Omega_\subM((P_{\mathfrak{g}_S}(\eta) )_\subM,X_\nh) = - d\Ham_\subM ((P_{\mathfrak{g}_S}(\eta) )_\subM )=0.
\end{gather*}
Assertion $(ii)$ is just a consequence of Proposition~\ref{Prop:3Conditions}$(ii)$,
since $P_{\mathfrak{g}_S}(\eta)  \in \Gamma(\g_S)$ and it
is a~generator of the f\/irst integral $f_\eta$.
\end{proof}

In other words, if $W$ satisf\/ies the
 vertical symmetry condition and
 \begin{gather}\label{Eq:JK(Xnh,xi)=0}
 \langle \mathcal{J},\mathcal{K}_\subW\rangle (X_{\nh},( P_{\mathfrak{g}_S}(\eta))_\subM ) = 0,
 \end{gather}
 then, for $\eta\in \mathfrak{g}$,  $P_{\mathfrak{g}_S}(\eta) \in \Gamma(\mathfrak{g}_\subSS)$
 is a horizontal gauge symmetry and $f_\eta$ is  the associated horizontal gauge momentum.

\begin{Remark}
 When $W$ satisf\/ies the vertical symmetry condition, there is a Lie subgroup $G_\subW$ of $G$ integrating the Lie algebra $\mathfrak{g}_\subWW$ and the nonholonomic system is $G_\subW$-Chaplygin, see~\cite{BF2015}.  The reduction of the nonholonomic system (on the Lagrangian side) by the Lie group $G_\subW$ gives the Lagrange d'Alembert Poincar\'e equations on $T(Q/G_\subW)$. However there is a remaining Lie group denoted by $H = G/G_\subW$ that is a symmetry of the equations on $T(Q/G_\subW)$.
 The coordinate version of condition \eqref{Eq:JK(Xnh,xi)=0} appears already in \cite[Theorem~6]{Zenkov} but written on $T(Q/G_\subW)$, after performing the reduction by $G_\subW$.
 \end{Remark}

\begin{Proposition}\label{P:RankS}
 Consider a nonholonomic system $(Q,L,D)$ with a $G$-symmetry that sa\-tis\-fies
 the dimension assumption. If the vertical complement $W$ satisfies the vertical
 symmetry condition and also if $\langle \J,\mathcal{K}_\subW\rangle(X_{\emph\nh}, \eta_\subM) = 0$
 for all $\eta\in \g$, then  there are $k:= \operatorname{rank} S$ linearly independent
 horizontal gauge momenta of the form
 \begin{gather}\label{Eq:f_eta}
   f_\eta = \langle \J^{\emph\nh}, P_{\g_S}(\eta)\rangle,
 \end{gather}
 for $\eta\in \g$.
\end{Proposition}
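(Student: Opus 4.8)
The plan is to combine the pointwise linear algebra of the splitting with the conservation statement of Proposition~\ref{Prop:Conserved}. First I would observe that the hypothesis ``$\langle\J,\mathcal{K}_\subW\rangle(X_{\nh},\eta_\subM)=0$ for all $\eta\in\g$'' is exactly condition \eqref{Eq:JK(Xnh,xi)=0} for \emph{every} $\eta\in\g$, because $\langle\J,\mathcal{K}_\subW\rangle$ is skew and $P_{\g_S}(\eta)$ ranges over a spanning set of sections of $\g_\subSS$ as $\eta$ ranges over $\g$; moreover, for $\eta\in\Gamma(\g_\subWW)$ one has $(P_{\g_S}(\eta))_\subM=0$, so nothing is lost. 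Hence Proposition~\ref{Prop:Conserved} applies to each $\eta\in\g$ and yields that $f_\eta=\langle\J^{\nh},P_{\g_S}(\eta)\rangle$ is a horizontal gauge momentum, with generator $P_{\g_S}(\eta)\in\Gamma(\g_\subSS)$.

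The remaining point is the count: one must produce $k=\operatorname{rank}S$ of these that are \emph{linearly independent} as functions on $\M$. Here I would fix a point $q_0\in Q$ and choose $\eta_1,\dots,\eta_k\in\g$ such that $\{(P_{\g_S}(\eta_a))_\subQQ(q_0)\}_{a=1}^k$ is a basis of $S_{q_0}=D_{q_0}\cap V_{q_0}$; this is possible precisely because $P_{\g_S}\colon\g\to\g_\subSS|_{q_0}$ is surjective and $\g_\subSS|_{q_0}$ has rank $k$ by definition of $S$ (using that the $G$-action is free, so $\xi\mapsto\xi_\subQQ(q_0)$ is an isomorphism $\g\to V_{q_0}$). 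By continuity of the distribution $S$ and of the map $\eta\mapsto(P_{\g_S}(\eta))_\subQQ$, the sections $\xi_a:=P_{\g_S}(\eta_a)$ remain pointwise linearly independent in $\g_\subSS$ on a neighborhood, and in fact globally if one works with a local basis as in the footnote to Lemma~\ref{L:C-lift}; it suffices to exhibit $k$ independent first integrals, which is a local-to-open statement. Then I would argue that the associated functions $f_{\eta_a}=\langle\J^{\nh},\xi_a\rangle$ on $\M$ are linearly independent: a relation $\sum_a c_a f_{\eta_a}=0$ with constants $c_a$ would give, evaluating at points $m\in\M$ over a fixed $q$ and using that $\J^{\nh}(m)$ restricted to $S_q$ is the honest momentum $\kappa^\flat$-image and hence surjects onto $S_q^*$ as $m$ varies in $\M_q$, that $\sum_a c_a\xi_a(q)=0$ in $\g_\subSS|_q$, forcing all $c_a=0$ by pointwise independence.

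Concretely, the non-degeneracy input is that for each $q$ the map $\M_q\to S_q^*$, $m\mapsto\langle\J^{\nh}(m),\cdot\,\rangle|_{\g_\subSS|_q}$, is surjective (indeed affine-surjective): writing $m=\kappa^\flat(v)$ with $v\in D_q$, one has $\langle\J^{\nh}(m),\xi\rangle=\kappa(v,\xi_\subQQ(q))$, and since $\xi_\subQQ(q)$ ranges over all of $S_q$ while $v$ ranges over $D_q\supseteq S_q$, the pairing is non-degenerate. Therefore the $f_{\eta_a}$, being independent linear-in-the-momenta functions whose ``symbols'' $\xi_a(q)$ are pointwise independent sections of $\g_\subSS$, are functionally independent on $\M$. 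This gives the claimed $k$ linearly independent horizontal gauge momenta of the form \eqref{Eq:f_eta}.

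The main obstacle I anticipate is purely bookkeeping rather than conceptual: one must be careful that ``linearly independent'' is meant over $\R$ (independence of the functions $f_{\eta_a}$, equivalently functional independence), and that the independence of the \emph{sections} $\xi_a=P_{\g_S}(\eta_a)$ in $\Gamma(\g_\subSS)$ — which is what actually gets used — is only guaranteed pointwise near $q_0$ and need not persist globally unless $\g_\subSS$ is a trivial bundle; the footnote to Lemma~\ref{L:C-lift} (regularity of $S$, hence a local basis of sections of $\g_\subSS$ built from constant elements of $\g$) is exactly what lets one carry this out locally, which is all that is needed for the statement. The rest reduces to invoking Proposition~\ref{Prop:Conserved}$(i)$ termwise.
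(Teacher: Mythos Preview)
Your argument is correct, and the conservation half is identical to the paper's: both invoke Proposition~\ref{Prop:Conserved} after noting that the hypothesis $\langle\J,\mathcal{K}_\subW\rangle(X_{\nh},\eta_\subM)=0$ for all $\eta\in\g$ implies \eqref{Eq:JK(Xnh,xi)=0} (since $\mathcal{K}_\subW$ annihilates $\W$).

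The independence argument, however, is genuinely different. The paper chooses the $\eta_i$ by first taking a basis of \emph{constant} sections of $\g_\subWW$ (which exists precisely by the vertical symmetry condition) and completing it to a basis of $\g$; this guarantees that the $P_{\g_S}(\eta_i)$ are pointwise independent at \emph{every} $q$, not just near some $q_0$. For independence of the $f_i$, the paper then argues at the level of differentials restricted to $\C$: it invokes \cite[Theorem~6.5(ii)]{Paula} to write $df_\eta|_\C={\bf i}_{(P_{\g_S}(\eta))_\subM}\Omega_{\J\mathcal{K}}|_\C$ and uses the nondegeneracy of $\Omega_{\J\mathcal{K}}|_\C$ (from \cite{Paula,PL2011}) to conclude $(P_{\g_S}(\eta))_\subM=0$, hence $\eta\in\g_\subWW$, contradicting the basis choice. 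Your route is more elementary and self-contained: you bypass $\Omega_{\J\mathcal{K}}$ entirely and use only that $f_{\eta_a}(\kappa^\flat(v))=\kappa(v,(\xi_a)_\subQQ)$ together with nondegeneracy of the kinetic energy metric on $D$. The paper's approach yields the slightly stronger conclusion of functional independence (the $df_i$ are pointwise independent) and ties the result to the almost-symplectic structure $\Omega_{\J\mathcal{K}}$ that drives the rest of the paper; your approach has the advantage of needing no external input beyond what is already in Section~\ref{S:Preliminaries}, at the cost of giving only $\R$-linear independence and being local in the choice of $\eta_a$ (which, as you note, is enough for the statement).
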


\begin{proof}
  By the vertical symmetry condition,  let us consider a basis of constant sections
  $\{\tilde \eta_1, \dots$, $\tilde \eta_n\}$ of $\g_\subWW$. Complete now the basis
  so that $\mathfrak{B} = \{\eta_1,\dots, \eta_k , \tilde \eta_1, \dots , \tilde \eta_n\}$
  is a basis of $\g$ (i.e., $\eta_i$ are constant sections of $\g\times Q\to Q$).
  Let us def\/ine $f_i = \langle \J^\nh, P_{\g_S}(\eta_i)\rangle$, $i=1,\ldots, k$.
  We claim that $f_i$, $i=1,\ldots, k$ are $k$ functionally independent f\/irst
  integrals of the nonholonomic dynamics.
  Since $\langle \J,\mathcal{K}_\subW\rangle(X_\nh, P_{\g_S}(\eta_i) \rangle = 0$, $i=1,\ldots, k$,
  the functions $f_1, \ldots, f_k$ are constants along the motions by Proposition~\ref{Prop:Conserved}.
  To see that they are functionally independent, we need to show that the function
  $F= (f_1,\dots , f_k)\colon \M\longrightarrow \R^k$ is a submersion. Now suppose that
  $c_i  d\langle \J^\nh , P_{\g_S}(\eta_i) \rangle = 0$ on $\M$ for $(c_1,\dots , c_k)\in\R^k$ and
  $(c_1,\ldots, c_k)\ne (0,\ldots, 0)$ (or at least that $c_i d\langle \J^\nh , P_{\g_S}(\eta_i)\rangle = 0$
  on an open set $U\subset \M$). In particular, $d\langle \J^\nh , P_{\g_S}(\eta)\rangle|_\subC = 0$ for $\eta= c_i  \eta_i$.
  On the other hand, again using \cite[Theorem~6.5(ii)]{Paula} (as in proof of Proposition~\ref{Prop:Conserved}) we have that ${\bf i}_{( P_{\mathfrak{g}_S}(\eta))_\subM} \Omega_{\mbox{\tiny{${\mathcal J} {\mathcal K}$}}} |_\C= d\langle \J^\nh , P_{\g_S}(\eta)\rangle|_\C = 0 $ for $\Omega_{\mbox{\tiny{${\mathcal J} {\mathcal K}$}}} := \Omega_\subM + \langle \mathcal{J},\mathcal{K}_\subW\rangle$.  Since $ \Omega_{\mbox{\tiny{${\mathcal J} {\mathcal K}$}}} |_\C$ is nondegenerate (\cite[Section~5.1]{Paula} or \cite[Proposition~3]{PL2011}), then $(P_{\g_S}(\eta))_\subM = 0$ which implies that $\eta = c_i  \eta_i\in \g_\subWW$ that is in contradiction with the choice of our basis.
\end{proof}

\section{Examples}\label{S:Examples}

\subsection{Vertical rolling disk}\label{Ex:Disk}

We consider a (homogeneous) disk which is constrained to roll without sliding on a horizontal
plane while standing vertically \cite{BKMM, FSG2008, NF, Pars}.
The conf\/iguration manifold is $Q = \R^2\times S^1\times S^1$ with coordinates $(x,y,\varphi,\psi)$,
where $(x,y)\in \R^2$ are Cartesian coordinates of the contact point,
$\psi$ is the angle between the $x$-axis and the projection of the disk on the plane and $\varphi$
is the angle between a f\/ixed radius of the disk and the vertical. Assuming that the
disk has unit mass the Lagrangian is
\begin{gather*}
  L = \frac{1}{2} \big(\dot x^2+\dot y^2\big) +\frac{1}{2 } I \dot \varphi^2 + \frac{1}{2 } J \dot\psi^2,
\end{gather*}
where $I$ and $J$ are the pertinent moments of inertia.
The nonholonomic constraints describing the rolling motion without slipping are given by
\begin{gather*}
     \dot x = R \cos\psi  \dot\varphi, \qquad
     \dot y = R \sin\psi  \dot\varphi,
 \end{gather*}
and they def\/ine the constraint distribution $D$ on $Q$ with f\/ibers
\begin{gather*}
 D_{(x,y,\varphi, \psi)} = \operatorname{span} \left\{ R \cos\psi  \partial_x + R\sin\psi   \partial_y+ \partial_\varphi, \partial_\psi  \right\} .
\end{gather*}
The group ${\rm SE}(2)\times S^1$ is a symmetry of the system with respect to the action on $Q$ given by
\begin{gather*}
  \big((a,b,\lambda),\mu \big).(x,y,\varphi,\psi) = (a+x\cos \lambda- y\sin\lambda,
  b+ x\sin\lambda+y\cos\lambda,\varphi+\lambda,\psi+\mu)
\end{gather*}
for $(a,b,\lambda)\in {\rm SE}(2)$ and $\mu\in S^1$.

Let us now pass to the Hamiltonian formulation on the cotangent bundle. In  canonical coordinates $(x,y,\varphi, \psi ;p_x,p_y,p_\varphi,p_\psi)$ on $T^*Q$,
the constraint submanifold $\M$ is given by
\begin{gather*}
\M = \left\{(x,y,\varphi,\psi,p_x,p_y,p_\varphi,p_\psi)\colon
p_x = \frac{R}{I} p_\varphi \cos\psi \ \textrm{and} \ p_y = \frac{R}{I} p_\varphi \sin\psi \right\}   ,
\end{gather*}
and since  the Hamiltonian on $T^*Q$ is
$\Ham = \frac{1}{2} (p_x^2+p_y^2) +\frac{1}{2 I} p_\varphi^2 + \frac{1}{2 J} p^2_\psi$, then $\Ham_\subM$ reads
$\Ham_\subM = \frac{1}{2 I} \big( \frac{R^2}{I} +1\big) p_\varphi^2 + \frac{1}{2 J} p^2_\psi$.

Observe that the vertical distribution $V$ has f\/ibers $V_{(x,y,\varphi,\psi)} = \operatorname{span} \{ \partial_x,\partial_y,
\partial_\varphi,\partial_\psi \}$, and that $S = V\cap D = D$.
Therefore the vertical complement $W$ can be chosen to be $W=\operatorname{span}\{ \partial_x, \partial_y\}$ and we can check that it satisf\/ies the vertical symmetry condition, Def\/inition~\ref{D:VertSym}. Moreover $\langle \J, \mathcal{K}_\subW \rangle = 0$ (see \cite{Paula}), since
\begin{gather*}
\J = \left(\frac{R}{I} p_\varphi \cos\psi , \frac{R}{I} p_\varphi \sin\psi,p_\varphi,p_\psi \right)  \in \g^*,
\end{gather*}
and the $\W$-curvature is $\mathcal{K}_\subW =(\sin \psi   d\psi \wedge d\varphi
  , -\cos \psi   d\psi \wedge d\varphi   , 0, 0 ).$
Thus the hypotheses of Propositions~\ref{Prop:Conserved} and~\ref{P:RankS} are satisf\/ied and then the system admits two independent horizontal gauge
momenta.
Since
\begin{gather*}
\g_\subSS = \operatorname{span} \{(R\cos\psi,R\sin\psi,1,0),  (-y,x,0,1)\}, \qquad   \g_\subWW = \operatorname{span} \{(1,0,0,0),  (0,1,0,0)\},
\end{gather*} then
\begin{gather*}
     \xi_1 := P_{\g_S} ((0,0,1,0)) = (R\cos\psi,R\sin\psi,1,0), \qquad
     \xi_2 := P_{\g_S} ((0,0,0,1)) = (-y,x,0,1),
 \end{gather*}
are horizontal gauge symmetries.
In fact, the associated $\M$-cotangent lifts are
$\left(\xi_1\right)_\subQQ^\subM = (\xi_1)_\subM = R\cos\psi \; \partial_x+ R\sin\psi \; \partial_y + \partial_\varphi$
and $\left(\xi_2\right)_\subQQ^\subM = (\xi_2)_\subM =  \partial_\psi$ and thus $\left(\xi_1\right)_\subQQ^\subM (\Ham_\subM) = \left(\xi_2\right)_\subQQ^\subM (\Ham_\subM) = 0$.
Therefore $\J_1 =  \langle \J^{\nh},\xi_1\rangle = \big(\frac{R^2}{I}+1\big)  p_\varphi$
and $\J_2 =  \langle \J^{\nh}, \xi_2\rangle = p_\psi$ are
horizontal gauge momenta.

\subsection{The ball rolling on a plane}\label{Ex:Ball}

Consider the classical example of a inhomogeneous ball of radius $r$ whose center of mass coincides with the geometric center of the ball that rolls without sliding on a plane \cite{BoMa2001,BoMaKi2002}. We will follow the notation and viewpoint of \cite{Paula, Luis}.  The conf\/iguration manifold is $Q = \SO(3) \times \R^2$ where $g \in \SO(3)$ represents the orientation of the ball and $(x,y)\in \R^2$ the point of contact of the ball with the plane. Consider the (left invariant) moving frame $\{X_1^L, X_2^L, X_3^L \}$ on $T_g(\SO(3))$ given by left translations of the canonical basis $\{e_1, e_2, e_3\}$ of $\so (3) \simeq \R^3$ and denote by $\vecOm = (\Omega_1, \Omega_2, \Omega_2)$ the angular velocities of the body with respect to this moving frame.  Then, the non-sliding constraints are given by
\begin{gather*}
\dot x = r \langle \vecbeta, \vecOm\rangle \qquad \mbox{and} \qquad  \dot y = - r\langle \vecalpha, \vecOm\rangle,
\end{gather*}
where $\vecalpha$ and $\vecbeta$ are the f\/irst and second rows of the matrix $g\in\SO(3)$ and $\langle \cdot, \cdot \rangle$ is the canonical pairing in $\R^3$. The non-integrable distribution $D$ is then generated by
\begin{gather*}
\left\{ X_1 := X_1^L + r\beta_1 \frac{\partial}{\partial x} - r \alpha_1 \frac{\partial}{\partial y} , \; X_2 := X_2^L + r \beta_2 \frac{\partial}{\partial x} - r\alpha_2 \frac{\partial}{\partial y},\right.\\
\left. X_3 : = X_3^L + r \beta_3 \frac{\partial}{\partial x} - r\alpha_3 \frac{\partial}{\partial y} \right\} .
\end{gather*}
Let us consider the action of the Lie group $G=\{ (h,a) \in \SO(3) \times \R^2 \colon h\cdot e_3 = e_3 \}$ on $Q$ given by
\begin{gather*}
(h,a) \colon \ (g, (x,y)) \mapsto (h g, (x,y) \tilde h^t + a) \in Q,
\end{gather*}
where $\tilde h$ is the $2\times 2$ rotational matrix determined by $h$.  This action is a symmetry of the nonholonomic system since it leaves invariant the distribution $D$ and the lagrangian $L (g, x,y; \vecOm, \dot x, \dot y) = \frac{1}{2}\left( \langle \mathbb{I} \vecOm, \vecOm \rangle + m( \dot x^2 + \dot y^2) \right),
$ where $\mathbb{I}$ the inertia tensor ($\mathbb{I}$ is represented by a diagonal matrix with entries $\mathbb{I}_1$, $\mathbb{I}_2$, $\mathbb{I}_3$). Note that the Lie algebra $\g$ associated to $G$ is $\R\times \R^2$ with the trivial structure and also
\begin{gather*}
(1;0,0)_\subQQ = \langle \vecgamma, {\bf X} \rangle -y\frac{\partial}{\partial x} + x\frac{\partial}{\partial y}, \qquad (0;1,0)_\subQQ = \frac{\partial}{\partial x} \qquad \mbox{and} \qquad (0;0,1)_\subQQ = \frac{\partial}{\partial y},
\end{gather*}
where $\vecgamma = (\gamma_1, \gamma_2, \gamma_3)$ is the third row of the rotational matrix $g$ and ${\bf X} = (X_1,X_2, X_3)$.
Therefore, we can choose the vertical complement $W$ to be $W=\operatorname{span}\{Z_1:=\partial_x, Z_2:= \partial_y\}$ and then
\begin{gather*}
\g_\subSS = \operatorname{span}\{(1;y,-x)\} \qquad \mbox{and} \qquad \g_\subWW =\operatorname{span}\{(0;0,1), (0;1,0) \}.
\end{gather*}

Now we will def\/ine the submanifold $\M\subset T^*Q$. Let us denote by $\vecL = (\lambda_1, \lambda_2, \lambda_3)$ where $\{\lambda_1, \lambda_2, \lambda_3\}$ is the (local) basis of $T^*(\SO(3))$ that is dual to the moving frame $\{X_1^L, X_2^L, X_3^L\}$.  Following \cite{Paula, Luis} we consider the basis $\{\lambda_1, \lambda_2, \lambda_3, \epsilon_x := dx - r \langle \vecbeta,\vecL \rangle , \epsilon_y := dy + r \langle \vecalpha,\vecL \rangle \}$ of $T^*Q$ that is dual to the adapted basis $\{X_1, X_2, X_3 ,Z_1, Z_2\}$ of $TQ = D\oplus W$. Then if $(K_1, K_2, K_3, \tilde p_x, \tilde p_y)$ are the coordinates on $T^*Q$ associated to the dual basis, the submanifold~$\M$ is given by
\begin{gather*}
\M = \big\{ (g,x,y; K_1, K_2, K_3, \tilde p_x, \tilde p_y) \colon {\bf K} = \mathbb{I}\vecOm + mr^2 (\vecOm - \langle \vecgamma, \vecOm \rangle \vecgamma ), \\
\hphantom{\M = \big\{ (g,x,y; K_1, K_2, K_3, \tilde p_x, \tilde p_y) \colon}{} \tilde p_x = mr \langle \vecbeta,\vecOm\rangle \  \mbox{and} \  \tilde p_y = - mr \langle \vecalpha, \vecOm\rangle  \big\} ,
\end{gather*}
where ${\bf K} = ( K_1, K_2, K_3)$.

An arbitrary section $\xi\in \Gamma(\g_\subSS)$ has the form
\begin{gather*}
\xi = \phi(g,(x,y)) (1; y,-x)= \phi.(1;0,0) + y \phi. (0;1,0) - y \phi.(0;0,1),
\end{gather*}
for $\phi\in C^\infty(Q)$ and also we where we denote $\phi = \phi(g,(x,y))$.
Then from Lemma~\ref{L:C-lift} the $\M$-cotangent lift of $\xi$  is
\begin{gather*}
  \xi_\subQQ^\subM =     \phi \langle \vecgamma, {\bf X} \rangle  + \left(  \langle \vecgamma, {\bf K} \rangle -  mr(y \langle \vecbeta,\vecOm\rangle +  x\langle \vecalpha,\vecOm\rangle)  \right) \pi_\nh^\sharp (d\phi) \\
\hphantom{\xi_\subQQ^\subM = }{} + mr \big( \langle \vecbeta,\vecOm\rangle \pi_\nh^\sharp(d(y\phi)) +  \langle \vecalpha,\vecOm\rangle \pi_\nh^\sharp(d(x\phi)) \big),
\end{gather*}
where we used that $\Theta_\subM = \langle {\bf K}, \vecL \rangle + \iota^* (\tilde p_x) \epsilon_x + \iota^*(\tilde p_y) \epsilon_y$ to compute $\langle \J, \eta_i\rangle$.
Knowing that the restricted hamiltonian $\Ham_\subM\colon \M\to \R$ is $\Ham_\subM = \frac{1}{2} \langle {\bf K},  \vecOm \rangle$, following Proposition~\ref{Prop:3Conditions}(ii) we should look for a function $\phi\in C^\infty(Q)$ such that
\begin{gather*}
d\Ham_\subM \big((\langle \vecgamma, {\bf K} \rangle -  mr\langle y\vecbeta + x\vecalpha ,\vecOm\rangle   ) \pi_\nh^\sharp (d\phi)\\
 \qquad{} + mr ( \langle \vecbeta,\vecOm\rangle \pi_\nh^\sharp(d(y\phi)) +  \langle \vecalpha,\vecOm\rangle \pi_\nh^\sharp(d(x\phi)) ) \big) = 0
\end{gather*}
to guarantee that $\xi$ is a horizontal gauge symmetry and so $\J_\xi = \langle \J^\nh, \xi\rangle$ is a f\/irst integral.  However, we can check that the vertical complement $W$ satisf\/ies the vertical symmetry condition (see Def\/inition~\ref{D:VertSym}) and therefore we have a natural candidate for the horizontal gauge symmetry: from Proposition~\ref{Prop:Conserved}, $\xi = P_{\g_S}(1;0,0) = (1; y, -x)$ is a horizontal gauge symmetry if $\langle \mathcal{J},\mathcal{K}_\subW\rangle (X_{\nh}, \xi_\subM )=0$ where, in this case, $\xi_\subM = \langle \vecgamma, {\bf X}\rangle$.  From \cite{Paula} we have that
\begin{gather*}
\langle \J,\mathcal{K}_\subW \rangle = mr^2 \langle \vecOm - \langle \vecgamma,\vecOm \rangle\vecgamma , d\vecL \rangle,
\end{gather*}
where $d\vecL = (\lambda_2\wedge\lambda_3, \lambda_3\wedge\lambda_1, \lambda_1\wedge\lambda_2)$. It is easy to check that $\langle \mathcal{J},\mathcal{K}_\subW\rangle (X_{\nh},\langle \vecgamma, {\bf X} \rangle )=0$ for $X_\nh = \langle \vecOm, {\bf X} \rangle + \langle {\bf K}\times \vecOm , \partial_{\bf K} \rangle$. Therefore the section $\xi = P_{\g_S}(1;0,0) = (1; y, -x)$ is the horizontal gauge symmetry and $\J_\xi = {\bf i}_{\langle \vecgamma, {\bf X} \rangle} \Theta_\subM = \langle \vecgamma, {\bf K} \rangle$ is a f\/irst integral of the dynamics, i.e., $\J_\xi$ is a~horizontal gauge symmetry.

From Proposition~\ref{P:RankS} we observe that $\operatorname{rank}S =1$ and so $\J_\xi$ is the only (up to constants) horizontal gauge symmetries of the form \eqref{Eq:f_eta}.

\begin{Remark} Observe that $\pounds_{\xi_\subM} \Omega_\subM = d\J_\xi - mr\langle \vecOm, d\vecgamma\rangle + \Sigma$ where $\Sigma$ satisf\/ies that $\Sigma (X) = 0$ for all $X\in \Gamma(\C)$. However, it is easy to check that $mr\langle \vecOm, d\vecgamma\rangle |_\C \neq 0 $ and therefore, the vector f\/ield $\xi_\subM = \langle \vecgamma, {\bf X} \rangle$ is not a {\it Cartan symmetry} in the sense of~\cite{Crampin-Mestdag}, see Remark~\ref{R:Cartan}.
\end{Remark}

\section{Conclusions and perspectives}
The main goal of this work is to show how certain f\/irst integrals linear in the momenta of a~nonholonomic systems with a $G$-symmetry might be generated by the symmetry group.
In particular, in Def\/inition~\ref{Def:M-lift} and Corollary~\ref{C:MLift} we def\/ine the $\M$-cotangent lift and we use it to characterize
horizontal gauge symmetries.
Then, under the dimension assumption~\eqref{Eq:DimAssumptionTQ}, the tangent
bundle can be splitted (see~\eqref{Eq:VertComplementTQ})
in the direct sum of the constraint distribution~$D$ and a~vertical complement~$W$.
If~$W$ satisf\/ies the so-called vertical symmetry condition (Def\/inition~\ref{D:VertSym}),
Proposition~\ref{Prop:Conserved} guarantees  that the nonholonomic system admits
horizontal gauge momenta induced by elements of the Lie algebra, while
Proposition~\ref{P:RankS} tells that the number of (linearly independent)
horizontal gauge momenta is given by the rank of the distribution $S=D\cap V$.

It is important to note that the choice of the vertical complement~$W$ is not unique.
Usually, the most natural choice for $W$ is a complement that satisf\/ies the vertical
symmetry condition, as we did in Examples~\ref{Ex:Disk} and~\ref{Ex:Ball}. This condition
is a `natural' condition that arises in the framework of reduction and Hamiltonization,
see~\cite{Paula}.  However, in some cases choosing a complement that satisf\/ies
the vertical symmetry condition might be too strong to study a group origin
of f\/irst integrals (linear in the momenta).  The cases where we need a vertical
complement~$W$ that does not satisfy the vertical symmetry condition is our
topic of future work and investigation.

More precisely, in the following example we show that when we choose a
complement $W$ satisfying the vertical symmetry
condition then, for $\eta\in \g$,   $\langle \mathcal{J},\mathcal{K}_\subW\rangle (X_\nh,( P_{\mathfrak{g}_\subSS}(\eta))_\M ) \neq 0$  (cf. Proposition~\ref{Prop:Conserved}).
However, the choice of a dif\/ferent vertical complement $W$ allows us to write the horizontal gauge symmetry in terms of a section $P_{\g_S}(\eta)\in \Gamma(\g_\subSS)$ for $\eta\in\g$; but in this case the vertical complement $W$ does not verify the vertical symmetry condition.

 \begin{Example}[the nonholonomic particle~\cite{BGMConservation, Pars}]
 The classical example of the nonholonomic particle describes a particle in $\R^3$
 subjected to the nonholonomic constraints $\dot z = y \dot x$ and where the
 Lagrangian is the canonical kinetic energy metric on $\R^3$. Then, in canonical coordinates
 the distribution $D$ has f\/ibers $D_{(x,y,z)} = \operatorname{span}\{ \partial_y, \partial_x+y\partial z \}$,
 the constraint manifold is
 \begin{gather*}
 \M = \{(x,y,z; p_x ,p_y , p_z)\colon  p_z = yp_x \},
 \end{gather*}
 and the restricted Hamiltonian is $\Ham_\subM = \frac{1}{2} ( (1+y^2)p_x^2+p_y^2 )$.
 The nonholonomic vector f\/ield is
 \begin{gather*}
 X_\nh = p_x\left(\frac{\partial}{\partial x} + y\frac{\partial}{\partial z} \right) + p_y\frac{\partial}{\partial y} - \frac{yp_xp_y}{1+y^2} \frac{\partial}{\partial p_x}.
 \end{gather*}

It is well known that the function $f(x,y,z,p_x,p_y) = p_x   \sqrt{1+y^2}$
is a f\/irst integral of the system and that it is a horizontal gauge momentum,
with respect to the $G$-symmetry given by the translational action of $\R^2$ on $\R^3$.
The vertical space associated to the $G$-action is $V = \operatorname{span}\{ \partial_x, \partial_z \}$, and
the subbundle $\g_\subSS$ is locally generated by $\{(1,y)\}$.
Then the function $f$ is a~horizontal gauge momentum with horizontal gauge symmetry
 $\xi =\sqrt{1+y^2}  (1,y) \in \Gamma(\mathfrak{g}_\subSS)$, i.e., $\xi_\subQQ^{T^*Q}(\Ham)(m)=0$ for $m\in\M$ and thus $f=\J_\xi$ (see~\cite{FSG2008}).

Now we are going to give a geometric interpretation of $\xi$ in terms of the tools of Section~\ref{S:GaugeSym},
that is, by choosing a vertical complement of the constraint distribution.
An obvious choice for the $G$-invariant vertical complement
$W$ so that it satisf\/ies the vertical symmetry condition is
$W = \operatorname{span}\{ \partial_z \}$.\footnote{This is the choice
done in \cite{Paula} in the framework of Hamiltonization.}
Observe that $\mathfrak{g}_\subWW = \operatorname{span}\{(0,1)\}$.
It is straightforward to check that for any $(a,b) \in \mathfrak{g} = \R^2$,
 $P_{\mathfrak{g}_\subSS} ((a,b) ) = a (1,y)$, so there is no element
 $\eta\in\mathfrak{g}$ such that $P_{\mathfrak{g}_\subSS} (\eta )$ gives the
horizontal gauge symmetry $\xi= \sqrt{1+y^2}(1,y)$.
Indeed, using that $\langle \mathcal{J},\mathcal{K}_\subW\rangle = yp_x dx\wedge dy$, we check that
$\langle \mathcal{J},\mathcal{K}_\subW\rangle (X_{\nh},( P_{\mathfrak{g}_\subSS}((1,0)))_\subM )\neq 0$ (see~\cite{Paula}).

However, it is possible to change $W$ in order to obtain a horizontal gauge symmetry
and a~horizontal gauge momentum of the form
$P_{\mathfrak{g}_\subSS}(\eta) \in \Gamma(\mathfrak{g}_\subSS)$
and $f_\eta= \langle \J^\nh, P_{\mathfrak{g}_\subSS}(\eta) \rangle$.

Consider the $G$-invariant vertical complement $W$ whose f\/ibers are given by
\begin{gather}\label{eq:W}
 W_{(x,y,z)}= \operatorname{span} \left\{ \big(1-\sqrt{1+y^ 2}\big) \frac{\partial}{\partial x} + y \frac{\partial}{\partial z} \right\}.
\end{gather}
The splitting  \eqref{Eq:gS+gW} of the Lie algebra $\g$ is given by
\begin{gather} \label{Ex:g-splitting}
 \mathfrak{g}_\subSS = \operatorname{span} \{ (1,y)\} \qquad \mbox{and} \qquad \mathfrak{g}_\subWW = \operatorname{span}\big\{\big(1-\sqrt{1+y^ 2} ,y\big)\big\}  .
\end{gather}
Observe that $W$ does not satisfy the vertical symmetry condition, however
\begin{gather*}
  P_{\mathfrak{g}_\subSS} ((1,0)) = \sqrt{1+y^2}(1,y) = \xi,
\end{gather*}
where $P_{\mathfrak{g}_\subSS} \colon \R^2 \to \mathfrak{g}_\subSS$ is the projection
associated to the splitting def\/ined in~\eqref{Ex:g-splitting}.
Observe that the projection $P_{\mathfrak{g}_\subSS}$ is induced by the choice of the vertical complement~$W$ given in~\eqref{eq:W}.
In this case, the horizontal gauge momentum~$f$ is exactly
$f = \langle \J^\nh, P_{\mathfrak{g}_\subSS}((1,0)) \rangle.$
\end{Example}

It is then clear that a deeper study of this topic is necessary as a more
geometric understanding of the Noetherian character (see~\cite{FSG2008, FSG2009})
of horizontal gauge momenta (see~\cite{BS2016}).

\subsection*{Acknowledgments}

This work is partially supported by the research projects
{\it Symmetries and integrability of nonholonomic mechanical systems} of the University of Padova.
N.S.~wishes to thank IMPA and H.~Bursztyn for the kind hospitality during which this work took origin.  P.B.~thanks the f\/inancial support of CAPES (grants PVE 11/2012 and PVE 089/2013) and CNPq's Universal grant. We also thank the anonymous referees for their useful comments.

\pdfbookmark[1]{References}{ref}
\LastPageEnding


\begin{thebibliography}{99}
\footnotesize\itemsep=0pt

\bibitem{Paula}
Balseiro P., The {J}acobiator of nonholonomic systems and the geometry of
  reduced nonholonomic brackets, \href{http://dx.doi.org/10.1007/s00205-014-0759-4}{\textit{Arch. Ration. Mech. Anal.}}
  \textbf{214} (2014), 453--501, \href{http://arxiv.org/abs/1301.1091}{arXiv:1301.1091}.

\bibitem{BF2015}
Balseiro P., Fernandez O.E., Reduction of nonholonomic systems in two stages
  and {H}amiltonization, \href{http://dx.doi.org/10.1088/0951-7715/28/8/2873}{\textit{Nonlinearity}} \textbf{28} (2015), 2873--2912,
  \href{http://arxiv.org/abs/1409.0456}{arXiv:1409.0456}.

\bibitem{PL2011}
Balseiro P., Garc{\'{\i}}a-Naranjo L.C., Gauge transformations, twisted
  {P}oisson brackets and {H}amiltonization of nonholonomic systems,
  \href{http://dx.doi.org/10.1007/s00205-012-0512-9}{\textit{Arch. Ration. Mech. Anal.}} \textbf{205} (2012), 267--310,
  \href{http://arxiv.org/abs/1104.0880}{arXiv:1104.0880}.

\bibitem{BS2016}
Balseiro P., Sansonetto N., Conserved quantities and {H}amiltonization, {w}ork in progress.

\bibitem{BGMConservation}
Bates L., Graumann H., MacDonnell C., Examples of gauge conservation laws in
  nonholonomic systems, \href{http://dx.doi.org/10.1016/0034-4877(96)84069-9}{\textit{Rep. Math. Phys.}} \textbf{37} (1996), 295--308.

\bibitem{BS93}
Bates L., {\'S}niatycki J., Nonholonomic reduction, \href{http://dx.doi.org/10.1016/0034-4877(93)90073-N}{\textit{Rep. Math. Phys.}}
  \textbf{32} (1993), 99--115.

\bibitem{Benenti}
Benenti S., Meccanica dei sistemi anolonomi, in Complementi alle Lezioni di
  Meccanica Razionale di T.~Levi--Civita e U.~Amaldi, Editors E.N.M.~Cirillo,
  G.~Maschio, T.~Ruggeri, G.~Saccomandi, CompoMat, 2012, 213--257.

\bibitem{BlochBook}
Bloch A.M., Nonholonomic mechanics and control, \href{http://dx.doi.org/10.1007/b97376}{\textit{Interdisciplinary
  Applied Mathematics}}, Vol.~24, Springer-Verlag, New York, 2003.

\bibitem{BKMM}
Bloch A.M., Krishnaprasad P.S., Marsden J.E., Murray R.M., Nonholonomic
  mechanical systems with symmetry, \href{http://dx.doi.org/10.1007/BF02199365}{\textit{Arch. Rational Mech. Anal.}}
  \textbf{136} (1996), 21--99.

\bibitem{BoMa2001}
Borisov A.V., Mamaev I.S., Chaplygin's ball rolling problem is {H}amiltonian,
  \href{http://dx.doi.org/10.1023/A:1012995330780}{\textit{Math. Notes}} \textbf{70} (2001), 720--723.

\bibitem{BoMa2008}
Borisov A.V., Mamaev I.S., Conservation laws, hierarchy of dynamics and
  explicit integration of nonholonomic systems, \href{http://dx.doi.org/10.1134/S1560354708050079}{\textit{Regul. Chaotic Dyn.}}
  \textbf{13} (2008), 443--490.

\bibitem{BoMaKi2002}
Borisov A.V., Mamaev I.S., Kilin A.A., Rolling of a ball on a surface. {N}ew
  integrals and hierarchy of dynamics, \href{http://dx.doi.org/10.1070/RD2002v007n02ABEH000205}{\textit{Regul. Chaotic Dyn.}} \textbf{7}
  (2002), 201--219, \href{http://arxiv.org/abs/nlin.SI/0303024}{nlin.SI/0303024}.

\bibitem{spagnoli}
Cantrijn F., de~Le{\'o}n M., Marrero J.C., de~Diego D.M., Reduction of
  nonholonomic mechanical systems with symmetries, \href{http://dx.doi.org/10.1016/S0034-4877(98)80003-7}{\textit{Rep. Math. Phys.}}
  \textbf{42} (1998), 25--45.

\bibitem{Cortes}
Cort{\'e}s~Monforte J., Geometric, control and numerical aspects of
  nonholonomic systems, \href{http://dx.doi.org/10.1007/b84020}{\textit{Lecture Notes in Math.}}, Vol.~1793,
  Springer-Verlag, Berlin, 2002.

\bibitem{Crampin-Mestdag}
Crampin M., Mestdag T., The {C}artan form for constrained {L}agrangian systems
  and the nonholonomic {N}oether theorem, \href{http://dx.doi.org/10.1142/S0219887811005452}{\textit{Int.~J. Geom. Methods Mod.
  Phys.}} \textbf{8} (2011), 897--923, \href{http://arxiv.org/abs/1101.3153}{arXiv:1101.3153}.

\bibitem{Book:Crampin}
Crampin M., Pirani F.A.E., Applicable dif\/ferential geometry, \textit{London
  Mathematical Society Lecture Note Series}, Vol.~59, Cambridge University
  Press, Cambridge, 1986.

\bibitem{CushmanBook}
Cushman R., Duistermaat H., {\'S}niatycki J., Geometry of nonholonomically
  constrained systems, \textit{Advanced Series in Nonlinear Dynamics}, Vol.~26,
  World Scientif\/ic Publishing Co. Pte. Ltd., Hackensack, NJ, 2010.

\bibitem{FSG2008}
Fass{\`o} F., Giacobbe A., Sansonetto N., Gauge conservation laws and the
  momentum equation in nonholonomic mechanics, \href{http://dx.doi.org/10.1016/S0034-4877(09)00005-6}{\textit{Rep. Math. Phys.}}
  \textbf{62} (2008), 345--367.

\bibitem{FSG2009}
Fass{\`o} F., Giacobbe A., Sansonetto N., On the number of weakly {N}oetherian
  constants of motion of nonholonomic systems, \href{http://dx.doi.org/10.3934/jgm.2009.1.389}{\textit{J.~Geom. Mech.}}
  \textbf{1} (2009), 389--416.

\bibitem{FSG2012}
Fass{\`o} F., Giacobbe A., Sansonetto N., Linear weakly {N}oetherian constants
  of motion are horizontal gauge momenta, \href{http://dx.doi.org/10.3934/jgm.2012.4.129}{\textit{J.~Geom. Mech.}} \textbf{4}
  (2012), 129--136.

\bibitem{FRS2007}
Fass{\`o} F., Ramos A., Sansonetto N., The reaction-annihilator distribution
  and the nonholonomic {N}oether theorem for lifted actions, \href{http://dx.doi.org/10.1134/S1560354707060019}{\textit{Regul.
  Chaotic Dyn.}} \textbf{12} (2007), 579--588.

\bibitem{FS2015}
Fass{\`o} F., Sansonetto N., Conservation of `moving' energy in nonholonomic
  systems with af\/f\/ine constraints and integrability of spheres on rotating
  surfaces, \href{http://dx.doi.org/10.1007/s00332-015-9283-4}{\textit{J. Nonlinear Sci.}}, {t}o appear, \href{http://arxiv.org/abs/1503.06661}{arXiv:1503.06661}.

\bibitem{Luis}
Garc{\'{\i}}a-Naranjo L., Reduction of almost {P}oisson brackets and
  {H}amiltonization of the {C}haplygin sphere, \href{http://dx.doi.org/10.3934/dcdss.2010.3.37}{\textit{Discrete Contin. Dyn.
  Syst. Ser.~S}} \textbf{3} (2010), 37--60, \href{http://arxiv.org/abs/0808.0854}{arXiv:0808.0854}.

\bibitem{Giachetta}
Giachetta G., First integrals of non-holonomic systems and their generators,
  \href{http://dx.doi.org/10.1088/0305-4470/33/30/308}{\textit{J.~Phys.~A: Math. Gen.}} \textbf{33} (2000), 5369--5389.

\bibitem{IbLeMaMa1999}
Ibort A., de~Leon M., Marrero J.C., Martin~de Diego D., Dirac brackets in
  constrained dynamics, \href{http://dx.doi.org/10.1002/(SICI)1521-3978(199906)47:5<459::AID-PROP459>3.0.CO;2-E}{\textit{Fortschr. Phys.}} \textbf{47} (1999), 459--492.

\bibitem{Jotz-Ratiu}
Jotz M., Ratiu T.S., Dirac structures, nonholonomic systems and reduction,
  \href{http://dx.doi.org/10.1016/S0034-4877(12)60016-0}{\textit{Rep. Math. Phys.}} \textbf{69} (2012), 5--56, \href{http://arxiv.org/abs/0806.1261}{arXiv:0806.1261}.

\bibitem{Marle95}
Marle C.-M., Reduction of constrained mechanical systems and stability of
  relative equilibria, \href{http://dx.doi.org/10.1007/BF02099604}{\textit{Comm. Math. Phys.}} \textbf{174} (1995),
  295--318.

\bibitem{Marle1998}
Marle C.-M., Various approaches to conservative and nonconservative nonholonomic
  systems, \href{http://dx.doi.org/10.1016/S0034-4877(98)80011-6}{\textit{Rep. Math. Phys.}} \textbf{42} (1998), 211--229.

\bibitem{MarsdenRatiuBook}
Marsden J.E., Ratiu T.S., Introduction to mechanics and symmetry. A~basic
  exposition of classical mechanical systems, \href{http://dx.doi.org/10.1007/978-0-387-21792-5}{\textit{Texts in Applied
  Mathematics}}, Vol.~17, 2nd ed., Springer-Verlag, New York, 1999.

\bibitem{NF}
Neimark Ju.I., Fufaev N.A., Dynamics of nonholonomic systems,
  \textit{Translations of Mathematical Monographs}, Vol.~33, Amer. Math. Soc.,
  Providence, RI, 1972.

\bibitem{Ortega-Ratiu}
Ortega J.-P., Ratiu T.S., Momentum maps and {H}amiltonian reduction,
  \href{http://dx.doi.org/10.1007/978-1-4757-3811-7}{\textit{Progress in Mathematics}}, Vol.~222, Birkh\"auser Boston, Inc.,
  Boston, MA, 2004.

\bibitem{Pars}
Pars L.A., A treatise on analytical dynamics, \href{http://dx.doi.org/10.1007/978-1-4757-3811-7}{Heinemann Educational Books Ltd.},
  London, 1968.

\bibitem{sniatycki}
{\'S}niatycki J., Nonholonomic {N}oether theorem and reduction of symmetries,
  \href{http://dx.doi.org/10.1016/S0034-4877(98)80002-5}{\textit{Rep. Math. Phys.}} \textbf{42} (1998), 5--23.

\bibitem{SchaftMaschke1994}
van~der Schaft A.J., Maschke B.M., On the {H}amiltonian formulation of
  nonholonomic mechanical systems, \href{http://dx.doi.org/10.1016/0034-4877(94)90038-8}{\textit{Rep. Math. Phys.}} \textbf{34}
  (1994), 225--233.

\bibitem{Zenkov}
Zenkov D.V., Linear conservation laws of nonholonomic systems with symmetry,
  \textit{Discrete Contin. Dyn. Syst.}  (2003), suppl., 967--976.

\end{thebibliography}
\end{document}